\theoremstyle{plain}
\newtheorem{lem}{Lemma}
\newtheorem{prop}[lem]{Proposition}
\newtheorem{thm}[lem]{Theorem}
\newtheorem*{thm*}{Theorem}
\newtheorem*{thmA}{Theorem A}
\newtheorem*{thmB}{Theorem B}
\newtheorem{cor}[lem]{Corollary}
\newtheorem*{cor*}{Corollary}
\theoremstyle{definition}
\newtheorem{defn}[lem]{Definition}
\newtheorem*{defn*}{Definition}
\newtheorem*{ex*}{Example}
\newtheorem{rem}[lem]{Remark}
\newtheorem*{rem*}{Remark}
\theoremstyle{remark}
\newtheorem*{notat}{Notation}
\DeclareMathOperator{\diam}{diam}
\DeclareMathOperator{\dist}{dist}
\DeclareMathOperator{\supp}{supp}
\newcommand{\R}{\mathbb R}
\newcommand{\N}{\mathbb N}
\newcommand{\LL}{\mathcal L}
\newcommand{\1}{1\!\!1}
\renewenvironment{proof}{\medbreak{\noindent\em Proof }}{~{\hfill$\bullet$\bigbreak}}
\begin{document}

\title[Harmonic measure]{Hausdorff and harmonic measures on non-homogeneous Cantor sets}
\author{Athanasios Batakis}
\author{Anna Zdunik}
\address{Athanasios Batakis, MAPMO, University of Orleans, BP 6759, 45067 Orl\'eans cedex 2, France}
\email{athanasios.batakis@univ-orleans.fr}
\address{Anna Zdunik, Institute of Mathematics, University of Warsaw,
ul.~Banacha~2, 02-097 Warszawa, Poland}
\email{A.Zdunik@mimuw.edu.pl}
\thanks{
The research of  A. Zdunik partially supported by the
  Polish NCN grant NN 201 607940.}
\maketitle
\begin{abstract}
We consider (not self-similar) Cantor sets defined by a sequence of piecewise linear functions. We prove that the dimension of the harmonic measure on such  a set is strictly smaller than its  Hausdorff dimension. Some Hausdorff measure estimates for these sets are also provided.
\end{abstract}

\section{Introduction. Statement of results.}\label{definition}
In this paper, we deal with the Hausdorff dimension and the harmonic measure of a certain type of Cantor sets $X$ in the plane.
Recall the definition of the  Hausdorff dimension of a (probability) Borel  measure $\mu$:
$$\dim_H(\mu)=\inf_{Z:\mu(Z)=1}\dim_H(Z)$$
where infimum is taken over all Borel subsets $Z$ with $\mu(Z)=1$.

Let $\omega$ be  the harmonic measure on $\hat{\mathbb{C}}\setminus X$ evaluated at $\infty$. By  celebrated  results of N. Makarov \cite{ma} and P. Jones, T. Wolff \cite{JV} the Hausdorff dimension of $\omega$ is not larger than one.
On the other hand, it is clear that  the Hausdorff dimension of $\omega$ is at most $\dim_H(X)$. Obviously, if $\dim_H(X)>1$ then $\dim_H(\omega)<\dim_H(X)$.
It has been observed, for several self-similar,  self-conformal sets, or, more generally, conformal repellers,  that $\dim_H(\omega)<\dim_H(X)$  (see, e.g. \cite{ba1}, \cite{Ca}, \cite{MV}, \cite{vol1}, \cite{vol2}, \cite{zd1}, \cite{zd3}, \cite{uz}).
Nevertheless, the intriguing question about the inequality of dimensions for an arbitrary self -conformal Cantor repeller, remains open.

Let us also recall that in $\R^d$, $d\ge3$, a general result of Bourgain \cite{bou} states that for all domains $\Omega$, the dimension of harmonic measure is bounded above by $d-\epsilon(d)$, where $\epsilon(d)$ is a positive constant depending only on $d$, whose exact value remains unknown.

All the  proofs of the strict inequality   $\dim_H(\omega)<\dim_H(X)$ for conformal repellers rely on the ergodic theory tools: one constructs an invariant measure equivalent to the harmonic measure and its ergodic properties play a crucial role in the arguments (see also \cite{LV}).

On the other hand, the inequality  $\dim_H(\omega)<\dim_H(X)$  is not true  for more  general Cantor sets,  even after assuming a strict regularity of the construction (\cite{ba1}).

In this paper we prove the inequality $\dim_H(\omega)<\dim_H(X)$ for a class of non-homogeneous Cantor sets. In this case there is no invariant ergodic measure equivalent to harmonic measure and hence previously mentioned tools are inapplicable. This has also been the case of \cite{ba1}, where an analogous result was proved for a class of non-homogeneous 4-corner "translation invariant" Cantor sets.  That proof made use of  special symmetries of the set.
In the present paper, using an entirely different approach,  we prove a general result.  In fact, the results of \cite{ba1} are a special case of our Theorem A. 

More precisely, we consider the following class of Cantor sets in the plane (even though proofs can be easily generalized to higher dimensions).

Let $Q$ be a Jordan domain in $\mathbb{C}$.  Let $M>0$, $0<\underline a < \overline a<1$  be fixed.
We fix a positive integer $N>1$.
\begin{defn}\label{df1}
Let $\mathcal{Q}=(Q_1,\dots Q_N)$ be a family of Jordan domains such that each $Q_i$ is a preimage of $Q$ under some (expanding) similitude  $(a_i)^{-1}z+b_i$.

 We call  a family $\mathcal{Q}=(Q_1,\dots Q_N)$ \emph{admissible} if the following holds:
\begin{enumerate}
\item{} $\underline a\le|a_i|\le \overline a$
\item{} $\rm{cl} Q_i\subset Q$
\item{}  there exists an annulus $A\subset Q $ with $mod(A)>M$ and separating $\partial Q$ from $\bigcup_jQ_j$ (i.e $\partial Q$ and $\bigcup_jQ_j$ are in different components of $\mathbb{C}\setminus A$.
\end{enumerate}
\end{defn}
\begin{defn} Note that, in this way, we have introduced  a piecewise linear map $f$ defined on the union of admissible discs:  $f: \bigcup_{Q_i\in \mathcal{Q}} Q_i\to Q$ by the formula
$$f(z)=\sum_{i=1}^N(a_{i}^{-1}z+b_{i})\1_{Q_{i}},$$ where $a_{i}^{-1}Q_{i}+b_{i}=Q$.
If $\mathcal{Q}$ satisfy the conditions in Definition~\ref{df1} then we call the map $f$ admissible.
\end{defn}

\begin{defn} A  set $X_0\subset \mathbb{C}$ is called admissible if

$$X_0=\bigcap_{n=1}^\infty \left (f_n\circ f_{n-1}\circ\dots \circ f_1\circ f_0\right )^{-1}(Q).$$

for some sequence of admissible maps   $f_k$:
$$f_k(z)=\sum_{i=1}^N(a_{k,i}^{-1}z+b_{k,i})\1_{Q_{k,i}},$$ where $a_{k,i}^{-1}Q_{k,i}+b_{k,i}=Q$.
So, the map $f_k$ is defined on the union of the domains $\{Q_{k,i}\}_{i=1}^N$, and $f_k\left(Q_{k,i}\right)=Q$, for all $i=1,...,N$.

\end{defn}

\begin{rem}
Note that  $\left (f_n\circ f_{n-1}\circ\dots \circ f_0\right )^{-1}(Q)$ is a descending family of sets. Moreover, since $f^{-1}\left({\rm cl}Q\right)\subset Q$ for every admissible map, we have
 $$X_0=\bigcap_{n=1}^\infty \left (f_n\circ f_{n-1}\circ\dots \circ f_0\right )^{-1}(\rm{cl}Q),$$
 thus $X_0$ is a compact set, actually- a Cantor set. The last follows from item $(1)$ in the definition of an admissible family (expanding property).
 \end{rem}

In the present paper we  prove the following
\begin{thmA}\label{main}
Let $X$ be an admissible Cantor set. Let $\omega$ be the harmonic measure on $X$. Then
$$\dim_H(\omega)<\dim_H(X).$$
\end{thmA}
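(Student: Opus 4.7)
The plan is to work with the cylinder structure of the construction. For a multi-index $\alpha=(i_1,\dots,i_n)$ let $Q_\alpha$ be the corresponding level-$n$ cylinder, $r_\alpha=\diam Q_\alpha$ (comparable to $\prod_{k=1}^n |a_{k,i_k}|$ by Definition~\ref{df1}(1)--(2)), and $\omega_\alpha=\omega(Q_\alpha)$. The Jones--Wolff bound $\dim_H\omega\le 1$ already gives the result when $\dim_H X>1$, so the focus is on $\dim_H X\le 1$. Standard mass-distribution/entropy arguments yield the upper bound
\[
\dim_H\omega\le\liminf_{n\to\infty}\frac{-\sum_\alpha\omega_\alpha\log\omega_\alpha}{-\sum_\alpha\omega_\alpha\log r_\alpha}.
\]
For the Cantor set, let $s_n$ be the unique positive solution of $\sum_\alpha r_\alpha^{s_n}=1$ and let $\nu_n$ denote the probability $\nu_n(Q_\alpha)=r_\alpha^{s_n}$; the bounded geometry from Definition~\ref{df1} gives $\dim_H X\ge\liminf_n s_n$, while tautologically $s_n=(-\sum\nu_n(Q_\alpha)\log\nu_n(Q_\alpha))/(-\sum\nu_n(Q_\alpha)\log r_\alpha)$. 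A Gibbs-inequality computation then reduces the dimension gap to a relative entropy:
\[
s_n-\frac{-\sum_\alpha\omega_\alpha\log\omega_\alpha}{-\sum_\alpha\omega_\alpha\log r_\alpha}=\frac{D(\omega_n\|\nu_n)}{-\sum_\alpha\omega_\alpha\log r_\alpha}.
\]
Since the denominator is $O(n)$, the theorem reduces to proving $D(\omega_n\|\nu_n)\ge cn$ for a fixed $c>0$.

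The crux is a uniform \emph{per-level} Kullback--Leibler gap over admissible families. For each admissible $\mathcal{Q}=(Q_1,\dots,Q_N)$ let $\omega^\mathcal{Q}=(\omega_i^\mathcal{Q})_{i=1}^N$ be the harmonic-measure vector on the children of the \emph{self-similar} Cantor set $X_\mathcal{Q}$ obtained by iterating $\mathcal{Q}$ at every level, and let $\nu^\mathcal{Q}=(|a_i|^{s(\mathcal{Q})})_{i=1}^N$ be the corresponding Bowen weights, where $s(\mathcal{Q})$ solves $\sum_i|a_i|^{s(\mathcal{Q})}=1$. I claim $\epsilon_0:=\inf_{\mathcal{Q}\text{ admissible}}D(\omega^\mathcal{Q}\,\|\,\nu^\mathcal{Q})>0$. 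The proof rests on two ingredients: \emph{compactness} --- admissible families modulo global similarities form a precompact class whose closure is still admissible (the contraction bound in Definition~\ref{df1}(1) and the modulus bound in Definition~\ref{df1}(3) rule out degenerations), and both $\omega^\mathcal{Q}$ and $\nu^\mathcal{Q}$ depend continuously on $\mathcal{Q}$ in this topology; and \emph{positivity} --- for each individual admissible $\mathcal{Q}$ the self-similar set $X_\mathcal{Q}$ carries an $f_\mathcal{Q}$-invariant measure equivalent to its harmonic measure, so the ergodic-theoretic arguments of \cite{zd1,MV,vol1} give $\dim_H\omega_\mathcal{Q}<\dim_H X_\mathcal{Q}$, which is equivalent to $D(\omega^\mathcal{Q}\|\nu^\mathcal{Q})>0$. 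Together they yield $\epsilon_0>0$.

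To pass from this per-level gap to the global lower bound $D(\omega_n\|\nu_n)\ge\epsilon_0 n-O(1)$ I telescope along the tree of cylinders. Conditioning on a level-$(n-1)$ cylinder $Q_\beta$, Harnack's inequality --- applied uniformly thanks to the annulus of modulus $M$ in Definition~\ref{df1}(3) --- lets me compare the conditional distribution of $\omega$ on the children of $Q_\beta$ with the self-similar vector $\omega^{\mathcal{Q}_n}$ attached to an isolated copy of $\mathcal{Q}_n$, up to a multiplicative error with uniformly bounded logarithm. Summing the resulting KL gain of at least $\epsilon_0$ over the $N^{n-1}$ cylinders at level $n-1$ and then over $n$ closes the argument. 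The principal obstacle is exactly this Harnack-type bounded-distortion step: one must show that the conditional harmonic measure on the children of $Q_\beta$, viewed from outside the whole of $X$, differs from the self-similar harmonic measure attached to $\mathcal{Q}_n$ in isolation by a multiplicative factor whose distortion remains controlled across arbitrarily deep compositions of admissible maps. The uniform modulus bound is the decisive input, both for this bounded-distortion estimate and for preventing any limiting admissible family from collapsing into a configuration in which the KL gap would vanish.
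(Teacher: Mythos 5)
There is a fatal gap in the ``positivity'' ingredient, before one even reaches the Harnack obstacle you flag yourself. You claim that, for a fixed admissible family $\mathcal{Q}$, the strict inequality $\dim_H\omega_{\mathcal{Q}}<\dim_H X_{\mathcal{Q}}$ for the self-similar set is \emph{equivalent} to $D(\omega^{\mathcal{Q}}\,\|\,\nu^{\mathcal{Q}})>0$, where $\omega^{\mathcal{Q}}$ and $\nu^{\mathcal{Q}}$ are the \emph{first-level} vectors. This is false: the dimension gap is an asymptotic statement about the whole measure on the symbolic tree (entropy versus Lyapunov exponent of the invariant measure equivalent to $\omega_{\mathcal{Q}}$), whereas your relative entropy only sees the level-one marginal. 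A four-fold symmetric admissible configuration (a Carleson-type four-corner arrangement of equal similar copies inside a symmetric $Q$, which satisfies Definition~\ref{df1}) has $\omega^{\mathcal{Q}}=(\tfrac14,\tfrac14,\tfrac14,\tfrac14)=\nu^{\mathcal{Q}}$ by symmetry, so $D(\omega^{\mathcal{Q}}\|\nu^{\mathcal{Q}})=0$, even though $\dim_H\omega_{\mathcal{Q}}<\dim_H X_{\mathcal{Q}}$ holds there (Carleson \cite{Ca}, Makarov--Volberg \cite{MV}). Hence $\epsilon_0=\inf_{\mathcal{Q}}D(\omega^{\mathcal{Q}}\|\nu^{\mathcal{Q}})=0$ and the telescoping yields no linear-in-$n$ lower bound; no compactness argument can rescue a quantity whose infimum is attained at value zero by explicit admissible families. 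Moreover, even if $\epsilon_0$ were positive, the bounded-distortion step cannot be carried out as stated: the conditional distribution of $\omega$ on the children of a level-$(n-1)$ cylinder depends on the whole tail $\mathcal{Q}_{n+1},\mathcal{Q}_{n+2},\dots$, and Harnack (via the modulus-$M$ annulus) only compares it with the isolated self-similar vector $\omega^{\mathcal{Q}_n}$ up to a fixed multiplicative constant $C(M)>1$, not up to factors tending to $1$; a bounded multiplicative perturbation of a probability vector can move it onto $\nu^{\mathcal{Q}_n}$ and destroy any relative-entropy lower bound. So the per-level KL gain of ``at least $\epsilon_0$'' does not survive the comparison.

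For contrast, the paper avoids both problems by a different dichotomy. Either a Bourgain-type condition holds uniformly --- for \emph{every} cylinder $I$ some subcylinder of depth at most $K$ (not depth $1$) carries harmonic and conformal proportions differing by a factor $\gamma>1$ --- in which case a Cauchy--Schwarz estimate on $\sum_I\omega(I)^{1/2}\nu(I)^{1/2}$ gives the dimension drop (Section~\ref{Bourgain}); or this condition fails along cylinders with depth parameters tending to infinity, and then the compactness results for sequences of admissible Cantor sets produce a limit admissible set on which a function of class $\mathcal{G}$ has Laplacian equal to a $\rho$-conformal measure (Section~\ref{alternative}), a coincidence excluded by a Volberg-style rigidity argument exploiting the piecewise-linear structure and approximate translation invariance of zero sets (Section~\ref{volberg}). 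In particular the paper never needs the self-similar dimension-gap theorem, nor any quantitative per-level entropy gap, which is precisely what makes the non-homogeneous (non-ergodic) setting tractable. If you want to salvage your scheme, the per-level quantity would have to be replaced by something like the paper's depth-$K$ condition, and the alternative case handled by a rigidity statement rather than by continuity of a quantity that can legitimately vanish.
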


This is the main result of this paper. The idea is to create an alternative between two situations, the one implying the result (section \ref{Bourgain}) and the other being   impossible (as we prove in sections \ref{alternative} and \ref{volberg}).
In the first situation we make use of a  tool due to Bourgain \cite{bou}.
In the second situation we refer to some  ideas due to  Volberg \cite{vol1}.

Note also that we can find a uniform strictly positive lower bound of $\dim X-\dim\omega$ that only depends on $\underline a$, $M$ and $N$ as will be pointed out in section \ref{Comments}.

Moreover, we have the result of independent interest:

\begin{thmB}\label{finitemeasureformulation}
Let $(f_k)(z)=\sum_{i=1}^N(a_{k,i}^{-1}z+b_{k,i})\1_{Q_{k,i}}$ be a sequence of admissible maps and let
$X=X_0$ be
the associated admissible  Cantor set. There exist a sequence of admissible functions $(\tilde f_k)$,
 $(\tilde f_k)(z)=\sum_{i=1}^N(\tilde a_{k,i}^{-1}z+\tilde b_{k,i})\1_{\tilde Q_{k,i}}$

 such that
\begin{enumerate}
\item $\lim_{k\to\infty}\max_i(|\tilde a_{k,i}-a_{k,i}|+|b_{k,i}-\tilde b_{k,i}|)=0$
\item the associated Cantor set $\tilde X$ is admissible and  $\dim_{\mathcal H}(\tilde X)=\dim_{\mathcal H}(X)$
\item $0<H_{\dim_{\mathcal H}(\tilde X)}(\tilde X)<\infty$.
\item If $\omega$ and $\tilde \omega$ are the harmonic measures of $X$ and $\tilde X$ respectively, then $\dim\omega=\dim{\tilde\omega}$.
\end{enumerate}
\end{thmB}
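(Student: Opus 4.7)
The plan is to realize $\tilde X$ as a small scalar perturbation of the contractions of $X$. Set $s := \dim_H(X)$ and write $\sigma_k := \sum_{i=1}^N |a_{k,i}|^s$. A standard covering estimate, combined with the uniform lower bound $M$ on the moduli of the separating annuli, shows that up to a bounded multiplicative factor the $s$-mass of the natural level-$n$ cover of $X$ equals $P_n := \prod_{k<n}\sigma_k$, and that the property $0<H_s(\tilde X)<\infty$ is equivalent to the perturbed partial products $\tilde P_n$ being bounded both above and away from $0$. Because $s$ is the Hausdorff dimension, $\log P_n$ must grow sublinearly: $\log P_n=o(n)$.

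I would then set $\tilde a_{k,i}:=\mu_k\, a_{k,i}$ for positive scalars $\mu_k$, and choose these inductively so that $\tilde P_n=\prod_{k<n}\mu_k^s\sigma_k$ stays inside a fixed compact interval $[1/C,C]\subset(0,\infty)$ for every $n$. Such a sequence $(\mu_k)$ exists and satisfies $\mu_k\to 1$: the per-step drift $\log\sigma_k$ is uniformly bounded (by $\underline a\le|a_{k,i}|\le\overline a$), while the cumulative drift $\log P_n$ is only $o(n)$, so the necessary compensating correction can be spread over arbitrarily long blocks of consecutive levels, keeping each $|\log\mu_k|$ as small as desired. The translations $\tilde b_{k,i}$ are then chosen as small shifts of $b_{k,i}$ (concretely, by rescaling each $Q_{k,i}$ about its centroid by the factor $\mu_k$) so that $\tilde Q_{k,i}\subset Q$ with separating annulus of modulus at least $M/2$; this is possible by continuity since $M>0$ is strict and $\mu_k$ is eventually close to $1$. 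Item~(1) follows from $|\tilde a_{k,i}-a_{k,i}|\le\overline a\,|\mu_k-1|\to 0$ and the analogous bound for $\tilde b_{k,i}$. Items~(2) and~(3) follow from the boundedness of $\tilde P_n$: the mass distribution principle, combined with the bounded-distortion estimates coming from the modulus condition, yields $0<H_s(\tilde X)<\infty$, which forces $\dim_H\tilde X=s=\dim_H X$.

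For item~(4), one couples the harmonic measures of $X$ and $\tilde X$ via the natural identification of the symbol sequences labelling level-$n$ cylinders. Since $(\tilde a_{k,i}, \tilde b_{k,i})\to(a_{k,i}, b_{k,i})$ as $k\to\infty$, the relative geometry of corresponding deep cylinders of $X$ and $\tilde X$ tends to an isometry, so the Bourgain-type lower bounds of Section~\ref{Bourgain} and the Volberg-type upper bounds of Section~\ref{volberg} apply to both sets with matching limiting constants, yielding $\dim\omega=\dim\tilde\omega$. The main technical obstacle is the construction of $(\mu_k)$ in the second paragraph: the qualitative hypothesis $\log P_n=o(n)$ furnishes no explicit rate, so one must be careful to spread the corrections across levels; a convenient way to organize this is to pick an increasing sequence of ``control levels'' $(n_j)$ at which $\tilde P_{n_j}$ is forced back to a fixed reference value by a cumulative correction that is uniformly small across each block $[n_{j-1},n_j)$, with the block lengths chosen to dominate the per-block drift of $\log P_n$.
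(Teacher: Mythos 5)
Your reduction of item (3) to controlling the products $\tilde P_n=\prod_{k<n}\sum_i|\tilde a_{k,i}|^{\rho}$ is the right starting point, but the key quantitative claim on which your construction rests is false, and the goal you set yourself is both stronger than needed and unachievable in general. From $\rho=\dim_H X$ one only gets $\liminf_n \frac{1}{n}\log P_n=0$ (compare the characterization (\ref{wymiar}) and Proposition~\ref{Hmeasure}): the \emph{negative} excursions of $\log P_n$ are $o(n)$, but the \emph{positive} excursions can be of linear size. For example, with $N=2$ and $\overline a$ close to $1$ one can alternate long blocks with $\lambda_{k,\rho}=e^{\delta}$ and blocks with $\lambda_{k,\rho}=e^{-\delta}$, with geometrically growing block lengths, so that $\log P_n$ returns to $O(1)$ infinitely often (hence $\rho$ is indeed the dimension) while $\limsup_n\frac1n\log P_n>0$. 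Consequently your assertion ``$\log P_n=o(n)$'' fails, and with it the existence argument for scalars $\mu_k\to1$ keeping $\tilde P_n\in[1/C,C]$ \emph{for every} $n$: tracking a linear-size excursion within a bounded corridor forces $|\log\mu_k|$ to stay bounded away from $0$ on that block. Moreover, your claimed equivalence is not an equivalence: by Proposition~\ref{Hmeasure} one has $H_\rho(\tilde X)\asymp\liminf_n\tilde P_n$, so only the $\liminf$ must be pinned in $(0,\infty)$; two-sided boundedness for all $n$ is unnecessary. This is exactly how the paper's proof of Theorem~\ref{finitemeasure} proceeds: in the case $H_\rho(X)=0$ it chooses, block by block, an exponent correction $\varepsilon_j$ defined by a \emph{maximality} property (the largest solution of $\prod_{k\le n}\lambda_{k,\rho-\varepsilon}=1$ over $n$ in the block), which simultaneously guarantees $\tilde P_{n_j}=1$ at the block ends, $\tilde P_n\ge1$ inside the blocks (so the $\liminf$ cannot be destroyed by interior dips — a point your ``control levels'' scheme does not address), and $\varepsilon_j\downarrow0$ so that condition (1) holds; the case $H_\rho(X)=+\infty$ is reduced to the previous one by scalar factors $\delta_j^{1/\rho}\to1$, which is the only place where your scalar-perturbation idea matches the paper.

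A secondary gap concerns item (4). Sections~\ref{Bourgain} and~\ref{volberg} prove the strict inequality $\dim\omega<\dim_H X$ for a single admissible set; they do not yield, and cannot be ``applied with matching limiting constants'' to yield, the equality $\dim\omega=\dim\tilde\omega$ for two different Cantor sets whose defining data agree only asymptotically. In the paper this item is not proved internally at all: it is quoted from the continuity results of \cite{Ba2} and \cite{BaHa}, which state that if $\lim_k\max_i(|a_{k,i}-\tilde a_{k,i}|+|b_{k,i}-\tilde b_{k,i}|)=0$ then the dimensions of the two harmonic measures coincide. Your sketch would need either to invoke these results or to supply a genuine proof of such a continuity statement, which is a substantial theorem in its own right.
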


The proof of items (1), (2) and (3) of this theorem are carried out in section \ref{HHM}. Item (4) follows from results of \cite{Ba2} and \cite{BaHa}.

The paper is organized in 11 sections. Section 2 contains some well known facts and introduces notation. Some basic remarks on Hausdorff dimension of the Cantor sets considered here and on conformal measures can be found in sections 3 and 4. Adapted tools from potential theory are presented in section 6 and in section 7 we apply all previous results to study limits of sequences of Cantor sets.

The proof of the main theorem is carried out in sections 8,9,10.
Section 8 provides a sufficient condition to have $\dim_HX>\dim\omega$. In section 9, we study the alternative case, when condition of section 8 fails. Using results of section 7 we deduce that if the sufficient condition fails  there is a set where harmonic and geometric measure co\"incide. Then, in section 10 we prove that this last claim cannot hold. 

Finaly, in section 11, we show that the assumptions of the main theorem are somehow optimal: we construct a Cantor set $X$ slightly different from the ones studied here,  for which $\dim_HX=\dim\omega$.

\section{Definitions and basic facts}
In this Section we  present the notation and some introductory remarks.
\

\begin{rem}\label{commonharnack}
Using the Harnack inequality and the condition $(3)$ in definition~\ref{df1} we conclude that there exists a universal constant $C$ (depending only on $M$) with the following property:
Let  $\mathcal{Q}=(Q_1,\dots Q_N)$ be an arbitrary admissible family of domains.  Then there exists a smooth Jordan  curve  $\gamma\subset Q\setminus \bigcup_jQ_j$ (depending on the family of domains), and  separating   $\partial Q$ from $\bigcup_jQ_j$ such that, for every  positive harmonic function  $\phi:Q\setminus \bigcup Q_j\to \mathbb{R}$,
\begin{equation}\label{harn}
\frac{\sup_\gamma\phi}{\inf_\gamma\phi}<C
\end{equation}
\end{rem}

\begin{notat}
 Note that  $f_0$ maps $X_0$ onto the Cantor set $X_1:=\bigcap_{n=1}^\infty \left (f_n\circ f_{n-1}\circ\dots \circ f_1\right )^{-1}(Q)$, and, generally, denoting

 $$X_k=\bigcap_{n=k}^\infty \left (f_n\circ f_{n-1}\circ\dots \circ f_{k+1}\circ f_{k}\right )^{-1}(Q)$$
 we have
\begin{equation}\label{seqk}
 X_0\stackrel{f_0}{\longrightarrow} X_1 \stackrel{f_1}{\longrightarrow} X_2\stackrel{f_2}{\longrightarrow}\dots X_k\stackrel{f_k}{\longrightarrow} X_{k+1}\dots
\end{equation} 



We shall use the notation $f^k$ for the composition $f_{k-1}\circ f_{k-2}\circ\dots\circ f_1\circ f_0$.
\end{notat}

Let $x\in X_{k+1}$. Then, for every $i=1,\dots N$ there exists a unique point  $y_{k,i}\in Q_{k,i}$ such that $f_k(y_{k,i})=x$.
 \begin{defn}
 Let $\LL_{k,s}:C(X_k)\to C(X_{k+1})$ be the operator defined as
 $$\LL_{k,s}(\phi)(x)=\sum_{i=1}^N\phi(y_{k,i})|a_{k,i}|^s$$
 (where we use the common notation $C(X)$ to denote the space of continuous functions defined on a compact metric space $X$).
 \end{defn}


\begin{defn}\label{coding}
 We shall use the natural coding $C_0$ of the set $X_0$ by the symbolic space $\Sigma$, consisting of infinite sequences of digits $j\in \{1,\dots , N\}$.
As usually, the $k$'th digit in the code  $C_0(x)$ equals $j$ if $f^k(x)=f_{k-1}\circ f_{k-2}\circ\dots\circ f_1\circ f_0\in Q_{k,j}$.
Similarly, the coding of the set $X_k$ is defined, so that $C_{k+1}(f_k(x))= \sigma (C_k(x))$ where $\sigma$ is the left shift.
\end{defn}

\begin{notat}
In what follows, we often identify the  symbolic cylinder $I$ and the corresponding subset of the Cantor set $C_0^{-1}(I)$.

The family of all cylinders $I\subset\Sigma$, of length $n$ will be denoted by $\mathcal{E}_n$.

Each cylinder $I$ of length $n$ defines a branch of the map $(f_{n-1}\circ \dots \circ f_1\circ f_0)^{-1}$. The image of $Q$ under this branch will be denoted by $Q_I$. Note that
$$Q_I\cap X_0=C_0^{-1}(I)$$ and the sets $Q_I$ are just the connected components of the set $(f_{n-1}\circ\dots \circ f_0)^{-1}(Q)$.
\end{notat}

We will denote by the same letter $C$ a constant which may vary in the proofs.

\section{Hausdorff dimension}
The following simple proposition gives an explicit formula for the Hausdorff dimension of the set $X$.

\begin{prop}\label{dimension}
Let $|a_{k,1}|,\dots |a_{k,N}|$ be the sequence of 'scales" used in the construction of $X_0$.
Then $\rho=dim_H(X_0)$ is  characterized in the following way:
\begin{equation}\label{wymiar}
\rho=\inf\{s:\liminf_{n\to\infty}\prod_{k=1}^n\left (|a_{k,1}|^s+|a_{k,2}|^s+\dots |a_{k,N}|^s\right )=0\}
\end{equation}

\end{prop}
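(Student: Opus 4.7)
The plan is to prove the two inequalities $\dim_H X_0 \le \rho$ and $\dim_H X_0 \ge \rho$ by elementary covering and mass-distribution arguments, exploiting the natural partition of $X_0$ into the cylinders $Q_I$, each of diameter $\diam(Q)\cdot\prod_{k=1}^{|I|}|a_{k,i_k}|$.

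For the upper bound I take any $s > \rho$, choose $s' \in (\rho, s)$, and use the definition of $\rho$ to extract a subsequence $n_j \to \infty$ along which $\prod_{k=1}^{n_j}\bigl(\sum_i |a_{k,i}|^{s'}\bigr) \to 0$. With $\{Q_I : I \in \mathcal{E}_{n_j}\}$ as a cover of $X_0$, the $s'$-Hausdorff sum factors as
$$
\sum_{I \in \mathcal{E}_{n_j}} \diam(Q_I)^{s'} = \diam(Q)^{s'} \prod_{k=1}^{n_j}\Bigl(\sum_i |a_{k,i}|^{s'}\Bigr),
$$
which tends to $0$. Hence $\mathcal{H}^{s'}(X_0) = 0$, so $\dim_H X_0 \le s' < s$, and $s \searrow \rho$ gives the desired inequality.

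For the lower bound, fix $s < \rho$. Since $s$ is strictly less than the infimum, $\liminf_n \prod_{k=1}^{n}\bigl(\sum_i |a_{k,i}|^s\bigr) > 0$; in fact, comparing with the same product at an intermediate exponent $s' \in (s,\rho)$ and using $\underline a \le |a_{k,i}| \le \overline a$, one sees that this product is bounded below by some $c_s > 0$ for all $n$ (and even tends to $\infty$). I then put the Bernoulli-type probability measure
$$
\mu(Q_I) = \frac{\prod_{k=1}^{|I|} |a_{k,i_k}|^s}{\prod_{k=1}^{|I|}\bigl(\sum_i |a_{k,i}|^s\bigr)}
$$
on $X_0$ (consistency across levels of the cylinder tree being immediate) and apply the mass-distribution principle. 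Letting $s \nearrow \rho$ then finishes the argument.

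The one non-routine step is the pointwise bound $\mu(B(x, r)) \le C r^s$. Given $r$, let $n$ be the largest integer with $\diam(Q_{I_n(x)}) \ge r$, so that $r$ is comparable to $\diam(Q) \prod_{k=1}^{n}|a_{k,i_k(x)}|$ and $\mu(Q_{I_{n+1}(x)}) \le c_s^{-1}\prod_{k=1}^{n+1}|a_{k,i_k}|^s \lesssim r^s$. The obstacle is to bound the number of length-$(n+1)$ cylinders meeting $B(x,r)$ by a constant $K_0$ independent of $n$ and $x$. This is where the modulus condition $\mathrm{mod}(A) > M$ is essential: after affine rescaling, every cylinder $Q_{I_n}$ has its $N$ children separated from each other and from $\partial Q_{I_n}$ by a definite fraction of its own size (the curve $\gamma$ of Remark~\ref{commonharnack}, transported by the similitude defining $Q_{I_n}$). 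An elementary separation estimate then yields such a $K_0$ depending only on $M$ and $N$, completing the mass-distribution argument.
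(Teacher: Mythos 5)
Your upper bound and the construction of the Bernoulli measure $\mu$ (which is exactly the $s$-conformal measure of Section~\ref{conformal}), together with the uniform lower bound $c_s$ on the partial products for $s<\rho$, are all fine. The genuine gap is in the step you yourself single out. First, admissibility does \emph{not} give that the $N$ children of a cylinder are ``separated from each other by a definite fraction of its own size'': condition (3) of Definition~\ref{df1}, and the curve $\gamma$ of Remark~\ref{commonharnack}, only separate $\partial Q$ from the \emph{union} $\bigcup_j Q_j$; two sibling domains may be arbitrarily close to one another. What is true (via Proposition~\ref{cap}) is that points of $X$ lie at distance $\ge C\diam Q_I$ from $\partial Q_I$ for every cylinder $I$ containing them, so it is the \emph{$X$-parts} of distinct cylinders that are separated, at the scale of the smaller of the two. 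Second, even granting a separation of this kind, the bound on the number of generation-$(n+1)$ cylinders meeting $B(x,r)$ is not an ``elementary separation estimate'': within a single generation the diameters range over $[\underline a^{\,n+1},\overline a^{\,n+1}]\diam Q$, so a ball of radius $r\asymp\diam Q_{I_n(x)}$ can swallow an entire subtree that branched off $x$'s itinerary at an earlier generation $m$, containing $N^{\,n+1-m}$ cylinders of generation $n+1$. The count is rescued only by the observation that $\diam Q_{I_m(x)}\ge \overline a^{-(n-m)}\diam Q_{I_n(x)}\ge \overline a^{-(n-m)}r$, combined with $\dist(x,\partial Q_{I_m(x)})\ge C\diam Q_{I_m(x)}$ from Proposition~\ref{cap}: any cylinder meeting $B(x,r)$ must agree with $x$'s code up to generation $n-m_1$ with $m_1\asymp \log(1/C)/\log(1/\overline a)$, so a constant $K_0\le N^{m_1+2}$ exists --- but it necessarily depends on $\overline a$ and $\underline a$ as well, not only on $M$ and $N$ as you claim (let $\overline a\to1$ and the count blows up). As written, the justification of the key estimate $\mu(B(x,r))\le Cr^s$ is therefore incomplete and rests on a separation property that the hypotheses do not provide.

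The gap is fixable within your scheme: either run the ancestor-chain argument above, or replace fixed-generation cylinders by the maximal (``stopping'') cylinders of diameter $\le r$ meeting the ball --- their $X$-parts are $\gtrsim \underline a\, r$-separated, hence boundedly many meet $B(x,r)$, and each has $\mu$-mass $\lesssim r^s$. Note that the paper avoids any multiplicity count altogether: in the proof of (\ref{wymiar_nu}) a ball centered on $X$ is iterated forward (it stays inside a single cylinder by condition (3)) until it reaches definite size, and conformality of the measure gives the estimate directly; Proposition~\ref{Hmeasure} offers a second route, comparing arbitrary coverings with coverings by cylinders using precisely the $X$-part separation described above.
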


\begin{proof}
First, note that  $\liminf_{n\to\infty}\prod_{k=1}^n \left (|a_{k,1}|^{s}+|a_{k,2}|^{s}+\dots |a_{k,N}|^{s}\right )=0$ for all $s>\rho$.
Pick some $s>\rho$.
There exists a subsequence $n_j\to\infty$ with
$$\prod_{k=1}^{n_j} \left (|a_{k,1}|^{s}+|a_{k,2}|^{s}+\dots |a_{k,N}|^{s}\right )\to 0$$
Let ${\mathcal D}_n$ be the family  of the domains \{$Q_I: I\in\mathcal{E}_n\}$  which appear at  the $n$'th step of the construction of the Cantor set $X$. Then
the above product is the same as

$$\frac{1}{(\diam Q)^{s}}\sum_{Q_I\in {\mathcal D}_{n_j}} (\diam Q_I)^{s}.$$

So we have:  $\sum_{Q_I\in {\mathcal D}_{n_j}} (\diam Q_I)^{s}\to 0.$
This shows that $\dim_H(X)\le \rho$.

The inequality $\dim_H (X)\ge \rho$ will follow from the estimate of the Hausdorff dimension of the measure $\nu_\rho$, see
Section~\ref{conformal}, Proposition~\ref{wymiar_nu}. Another argument is provided by  Proposition~\ref{Hmeasure}.
\end{proof}

 The observation in Proposition~\ref{cap} below  will be used is Section ~\ref{green}.

\begin{prop}\label{cap}
There exist $K\in\mathbb{N}$, $C>0$ such that the following holds. Let  $X$ be an admissible Cantor set, $I$ is a cylinder in the symbolic space $\Sigma$ and $J$ is another cylinder of length $K$  (so  $IJ$ is a subcylinder of $I$, with $K$ symbols added). Let $z\in Q_{IJ}$. Then
$$\dist(z,\partial Q_I)>C\diam Q_I.$$
\end{prop}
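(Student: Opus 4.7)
My plan is a scaling reduction to a uniform geometric estimate, followed by an appeal to the classical Teichmüller-type modulus inequality. The first step is to reduce to the base case $I=\varnothing$: the restriction $\phi_I:=(f_{|I|-1}\circ\cdots\circ f_0)|_{Q_I}$ is a similitude from $Q_I$ onto $Q$, sending $\partial Q_I$ to $\partial Q$ and $Q_{IJ}$ to some length-$K$ cylinder $Q'_J$ of the admissible construction defined by the shifted sequence $f_{|I|}, f_{|I|+1},\ldots$ inside $Q$. Since a similitude scales distances and diameters by the same factor, the inequality $\dist(z,\partial Q_I)>C\diam Q_I$ for $z\in Q_{IJ}$ is equivalent to $\dist(\phi_I(z),\partial Q)>C\diam Q$ for $\phi_I(z)\in Q'_J$. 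Moreover, any length-$K$ cylinder sits inside a length-$1$ cylinder, so it suffices to establish the uniform fact: there exists $c_0>0$, depending only on $Q,\underline a,\overline a,M,N$, such that $\dist(Q_i,\partial Q)\ge c_0\,\diam Q$ for every admissible family $(Q_1,\dots,Q_N)$ in $Q$ and every $i$. The proposition would then follow with $K=1$ and $C=c_0$.

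For the uniform estimate I would invoke condition~(3) of Definition~\ref{df1}. Let $A$ be the separating annulus of modulus greater than $M$, and let $E_0,E_1\subset\widehat\C$ denote the closures of the bounded and unbounded components of $\widehat\C\setminus A$, so that $\bigcup_j Q_j\subset E_0$ and $\partial Q\subset E_1$. The classical Teichmüller-type bound for ring domains then yields a finite constant $g(M)$, depending only on $M$, such that
\[
\diam E_0\le g(M)\,\dist(E_0,E_1)
\]
whenever an annulus of modulus at least $M$ separates a bounded continuum $E_0$ from a continuum $E_1\ni\infty$. Since $\diam E_0\ge\diam Q_i\ge\underline a\,\diam Q$, this gives $\dist(\bigcup_j Q_j,\partial Q)\ge\underline a\,\diam Q/g(M)$, and one sets $c_0:=\underline a/g(M)$.

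The only non-elementary ingredient is the Teichmüller-type modulus estimate, which is standard. A shorter but less quantitative alternative would be a compactness argument on the (bounded) parameter space of admissible tuples $(a_i,b_i)\in\mathbb{C}^{2N}$: the function assigning to a tuple the value $\min_i\dist(Q_i,\partial Q)/\diam Q$ is positive and lower semicontinuous on the compact set of admissible configurations, so its infimum is a strictly positive $c_0$. I would expect the only (small) technical matter in the detailed write-up to be citing the modulus estimate in precisely the right form; the rest of the argument is pure rescaling and inclusion.
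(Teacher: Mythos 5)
Your argument is correct, but it takes a genuinely different route from the paper's. The paper works directly with the ring $A=Q_I\setminus Q_{IJ}$: by superadditivity of the modulus over the $K$ nested separating annuli provided by condition (3) of Definition~\ref{df1}, one gets $\mathrm{mod}(A)>KM$, and $K$ is chosen so large that $KM$ exceeds the threshold beyond which a ring of large modulus essentially contains a round annulus of modulus $>1$; that round annulus then yields $\dist(z,\partial Q_I)>\diam Q_{IJ}\ge \underline a^{K}\diam Q_I$, so the paper's constant is $C=\underline a^{K}$ with $K=K(M)$ possibly large. You instead rescale by the similitude $\phi_I$ (legitimate, since the inequality is scale invariant and the shifted family $(Q_{|I|,1},\dots,Q_{|I|,N})$ is again admissible) and apply a Teichm\"uller-type separation bound to a \emph{single} separating annulus, which works for every $M>0$ without stacking annuli; this gives the stronger conclusion $K=1$ and $C=\underline a/g(M)$. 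Your key estimate is indeed a standard consequence of Teichm\"uller's modulus theorem: picking $a\in E_0$, $c\in E_1$ with $|a-c|=\dist(E_0,E_1)$ and $b\in E_0$ with $|a-b|\ge\tfrac12\diam E_0$, the ring separates $\{a,b\}$ from $\{c,\infty\}$, so $M<\mathrm{mod}(A)\le \mathrm{mod}\,R_T\!\left(2\dist(E_0,E_1)/\diam E_0\right)$, and monotonicity of $t\mapsto \mathrm{mod}\,R_T(t)$ together with $\mathrm{mod}\,R_T(t)\to 0$ as $t\to 0$ produces $g(M)$; combined with $\diam E_0\ge\diam Q_i\ge\underline a\,\diam Q$ this closes the argument. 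So your main proof is complete and in fact quantitatively sharper (explicit dependence on $\underline a$ and $M$ only, with $K=1$). One caveat: the ``compactness alternative'' you sketch at the end is not sound as stated, because the set of admissible tuples is not closed -- condition (3) is an open condition ($\mathrm{mod}>M$), as is $\cl Q_i\subset Q$, so limits of admissible configurations need not be admissible, and the uniform positivity you want there is essentially the very estimate being proved; since this was only an aside, it does not affect your proof.
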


\begin{proof} It is well known that
every topological annulus $A$ with sufficiently large modulus $N$  contains "essentially" a round annulus $R$ with a modulus $\tilde N>N-{\rm constant}$. "Essentially" means here that $R$ separates the boundary components of $A$.
Fix $N$ so large that $\tilde N>1$.
Fix $K$ such that $KM>N$.
Consider the annulus $A=Q_I\setminus Q_{IJ}$. It follows from the definition of an admissible Cantor set that ${\rm mod}(A)>KM>N$.  Since this annulus separates $Q_{IJ}$ from $\partial Q_I$,  we conclude that, for $z\in Q_{IJ}$, $\dist (z,\partial Q_I)>e^{\tilde N}\diam Q_{IJ}>\diam Q_{IJ}>\underline a^K\diam Q_I$.
\end{proof}

\section{Conformal measures}\label{conformal}
Let, as above, $X_0$ be an admissible set, $X_k=f^k(X_0)$.
\begin{defn}\label{confo}
Fix $h>0$.
The sequence of probability measures $\nu_0,\nu_1, \dots$ is called a collection of $h-$ conformal
measures if  $\supp \nu_k=X_k$ and the following holds:
there exists a sequence $\lambda_{k,h}$ of positive "scaling factors" such that
\begin{equation}\label{conf}
\LL_{k,h}^*(\nu_{k+1})=\lambda_{k,h} \nu_k
\end{equation}

Note that the condition (\ref{conf}) is equivalent to the following: if $B$ is a Borel measurable set, $B\subset Q_{k,i}$ then
\begin{equation}\label{conf2}
\nu_{k+1}(f_k(B))=\lambda_{k,h} \cdot (|a_{k,i}|^{-h})\cdot \nu_k(B)=\lambda_{k,h}\int_B|f'_k|^hd\nu_k
\end{equation}
\end{defn}
If $\rho$ is the common value of Hausdorff dimension of the sets $X_k$ and
 the $\rho$-dimensional Hausdorff measure $H_\rho$ of $X_0$ (and thus of all $X_k$) is positive and finite then the collection of normalized Hausdorff measures can be taken  as $\rho$- conformal measures $\nu_k$ in (\ref{confo}), with $\lambda_{k,\rho}=(|a_{k,1}|^\rho+\dots |a_{k,N}|^\rho)$ for all $k$.

But, even if  $H_\rho(X)$ equals zero or infinity, the collection of $\rho$ conformal measures exists, and, generally, the collection of $h$- conformal measures exists for every $h\ge 0$. The measure $\nu_0$ is uniquely determined by assigning to every cylinder $I$, of length $m$,  the value of the measure $\nu_0(I)$, or, more precisely, of the set $C_0(I)\subset X_0$:
\begin{equation}\label{measurenu}
\nu_0(I)=\frac{\left (|(f_{m-1}\circ\dots\circ f_1\circ f_0)'|^{-h}\right ) _{|I}}{\lambda_{0,h}\lambda_{1,h}\dots \lambda_{m-1,h}}
\end{equation}

The measures $\nu_k$, $k>0$, are defined in a similar  way:
\begin{equation}\label{measurenuk}
\nu_k(I)=\frac{\left (|(f_{m-1+k}\circ\dots\circ f_{1+k}\circ f_k)'|^{-h}\right )_{|I}}{\lambda_{k,h}\lambda_{1+k,h}\dots \lambda_{m-1+k,h}}
\end{equation}

The  normalizing factors are given explicitly:
\begin{equation}
\lambda_{n,h}=(|a_{n,1}|^h+\dots |a_{n,N}|^h),
\end{equation}
$n=0,1,2, \dots$.

Let us note the following straightforward

\begin{prop}\label{confinv} For every $h$, the sequence of $h$-conformal measures $\nu_k$
 is  invariant, i.e.
$$(f_k)_*(\nu_k)=\nu_{k+1}.$$
\end{prop}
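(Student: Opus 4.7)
The plan is to reduce the claim to the conformal duality relation $\LL_{k,h}^*\nu_{k+1}=\lambda_{k,h}\nu_k$ by testing it against functions pulled back by $f_k$ from $X_{k+1}$. The key observation is that for any $\psi\in C(X_{k+1})$, the transfer operator acts very simply on $\psi\circ f_k$: since each of the $N$ preimage points $y_{k,i}$ of $x\in X_{k+1}$ satisfies $f_k(y_{k,i})=x$, we have
$$\LL_{k,h}(\psi\circ f_k)(x)=\sum_{i=1}^N\psi(f_k(y_{k,i}))|a_{k,i}|^h=\psi(x)\sum_{i=1}^N|a_{k,i}|^h=\lambda_{k,h}\,\psi(x).$$

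With this identity in hand, I would integrate both sides against $\nu_{k+1}$, apply the definition of $\LL_{k,h}^*$, and use the conformal relation $\LL_{k,h}^*\nu_{k+1}=\lambda_{k,h}\nu_k$ to rewrite the result as
$$\lambda_{k,h}\int\psi\,d\nu_{k+1}=\int\LL_{k,h}(\psi\circ f_k)\,d\nu_{k+1}=\lambda_{k,h}\int\psi\circ f_k\,d\nu_k=\lambda_{k,h}\int\psi\,d(f_k)_*\nu_k.$$
Since $\lambda_{k,h}>0$, canceling and invoking the fact that $C(X_{k+1})$ separates Borel probability measures on the compact set $X_{k+1}$ would give the required equality $(f_k)_*\nu_k=\nu_{k+1}$.

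I do not anticipate any serious obstacle: the proposition is essentially a formal consequence of the definition of $\nu_k$ as an eigenmeasure of $\LL_{k,h}^*$, together with the fact that $f_k$ is an $N$-to-$1$ map whose branches have the piecewise constant derivative that the operator $\LL_{k,h}$ already encodes. As an entirely elementary alternative, one can verify the equality cylinder-by-cylinder using the explicit formulas (\ref{measurenu})--(\ref{measurenuk}): given a cylinder $J=(i_2,\dots,i_m)$ at level $k+1$, summing the expression for $\nu_k$ over the $N$ cylinders $(i_1,i_2,\dots,i_m)$ at level $k$ that $f_k$ maps to $J$ yields an extra factor $\sum_{i_1}|a_{k,i_1}|^h=\lambda_{k,h}$ in the numerator, which cancels the $\lambda_{k,h}$ present in the denominator of $\nu_k(I)$ but absent from $\nu_{k+1}(J)$, producing exactly the claimed value.
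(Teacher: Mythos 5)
Your proof is correct, and it is essentially the paper's argument read in dual form: the paper verifies $(f_k)_*\nu_k=\nu_{k+1}$ directly on Borel sets $A\subset X_{k+1}$ by splitting $f_k^{-1}(A)$ into its $N$ pieces and applying the set-level conformality (\ref{conf2}), so that the factors $|a_{k,i}|^h$ sum to $\lambda_{k,h}$ and cancel the normalization — exactly the cancellation your identity $\LL_{k,h}(\psi\circ f_k)=\lambda_{k,h}\psi$ encodes before you integrate against $\nu_{k+1}$ and invoke Riesz. Your alternative cylinder-by-cylinder verification via (\ref{measurenu})--(\ref{measurenuk}) is the same computation specialized to cylinders, so no gap either way.
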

\begin{proof}
This follows directly from the conformality condition (\ref{conf}).
It is enough to check for $k=0$. Let $A\subset X_1$ be an arbitrary Borel set.
Then $f_0^{-1}(A)=A_1\cup A_2\cup\dots \cup A_N$, where $A_j\subset Q_{0,j}$.
Using  (\ref{conf2}) we write



$$\nu_0(A_j)=|a_{0,j}|^h\cdot\frac{1}{\lambda_{0,h}}\nu_1(A)$$
and
$$\nu_0(f^{-1}_0(A))=\sum_{j=1}^N\nu_0(A_j)=\frac{1}{\lambda_{0,h}}\left( \sum_{j=0}^N |a_{0,j}|^h\right )\nu_1(A)=\nu_1(A).$$
\end{proof}

We note the following.

\begin{prop}Let $\rho$ be the number characterized by (\ref{wymiar}). If $\nu_k$ is the sequence of $\rho$- conformal measures then, for every $k\ge 0$
\begin{equation}\label{wymiar_nu}
\dim_H(\nu_k)=\rho
\end{equation}
\end{prop}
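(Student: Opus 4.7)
The equality $\dim_H(\nu_k)=\rho$ splits into an easy upper bound and a harder lower bound. The upper bound $\dim_H\nu_k\le\rho$ is immediate: $\nu_k$ is supported on $X_k$, and the inequality $\dim_H X_k\le\rho$ was already obtained in the opening paragraph of the proof of Proposition~\ref{dimension} via the direct cover argument using $\liminf_n\prod_{j<n}\lambda_{j,s}=0$ for $s>\rho$, a step independent of the present proposition. The real content is therefore $\dim_H\nu_k\ge\rho$, and by symmetry in $k$ I treat only $k=0$.

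The crucial structural remark, visible from~\eqref{measurenu}, is that under $\nu_0$ the coding digits $i_j(x)$ form an \emph{independent} sequence with $P(i_j=k)=p_{j,k}:=|a_{j,k}|^\rho/\lambda_{j,\rho}$, because $\nu_0(I_n(x))$ factorises as $\prod_{j<n}p_{j,i_j(x)}$. Kolmogorov's strong law applied to the uniformly bounded independent variables $\log|a_{j,i_j(x)}|$ yields, for $\nu_0$-a.e.\ $x$,
\[
\frac{\log\diam Q_{I_n(x)}}{n}=\bar\beta_n+o(1),\qquad \bar\beta_n:=\tfrac{1}{n}\sum_{j<n}\sum_k p_{j,k}\log|a_{j,k}|\in[\log\underline a,\log\overline a]\subset(-\infty,0).
\]
Combined with the conformal identity $\log\nu_0(I_n)=\rho\log\diam Q_{I_n}-\rho\log\diam Q-\sum_{j<n}\log\lambda_{j,\rho}$, dividing by $\log\diam Q_{I_n(x)}$ gives
\[
\frac{\log\nu_0(I_n(x))}{\log\diam Q_{I_n(x)}}=\rho-\frac{\tfrac{1}{n}\sum_{j<n}\log\lambda_{j,\rho}}{\bar\beta_n+o(1)}+o(1),
\]
so the desired pointwise lower bound $\liminf_n\log\nu_0(I_n(x))/\log\diam Q_{I_n(x)}\ge\rho$ reduces to the purely deterministic statement
\[
\liminf_{n\to\infty}\frac{1}{n}\sum_{j=0}^{n-1}\log\lambda_{j,\rho}\ge 0.
\]

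This last bound is the main obstacle, since~\eqref{wymiar} controls $\prod\lambda_{j,s}$ only for $s\ne\rho$, not at the critical value. I resolve it by a limit $s\uparrow\rho$: fix any $s<\rho$; by~\eqref{wymiar}, $\prod_{j<n}\lambda_{j,s}\ge c_s>0$ for all large $n$. The elementary inequality $|a_{j,k}|^{\rho-s}\ge\underline a^{\rho-s}$ (using $|a_{j,k}|\ge\underline a$ and $\rho-s>0$) gives $\lambda_{j,\rho}\ge\underline a^{\rho-s}\lambda_{j,s}$ and hence $\prod_{j<n}\lambda_{j,\rho}\ge c_s\underline a^{n(\rho-s)}$; taking $\tfrac{1}{n}\log$ yields $\tfrac{1}{n}\sum_{j<n}\log\lambda_{j,\rho}\ge(\rho-s)\log\underline a+o(1)$. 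Letting $s\uparrow\rho$, $(\rho-s)\log\underline a\to 0^-$, producing the claimed $\liminf\ge 0$.

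To transfer the cylinder pointwise bound to balls, I invoke Proposition~\ref{cap}: $\dist(x,\partial Q_{I_m(x)})>C\diam Q_{I_m(x)}$ for every $m$, so for any small $r$ one may pick $m=m(r)$ with $\diam Q_{I_m(x)}\asymp r$ and $B(x,r)\subset Q_{I_m(x)}$; then $\nu_0(B(x,r))\le\nu_0(Q_{I_m(x)})$, and therefore $\liminf_{r\to 0}\log\nu_0(B(x,r))/\log r\ge\rho$ for $\nu_0$-a.e.\ $x$, whence $\dim_H\nu_0\ge\rho$.
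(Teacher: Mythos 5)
Your proof is correct, but it is packaged differently from the paper's. The common core is the same comparison of normalizing factors: for $s<\rho$ the definition (\ref{wymiar}) keeps $\prod_{j<n}\lambda_{j,s}$ bounded away from $0$ (in fact it tends to $\infty$), and $\lambda_{j,\rho}\ge \underline a^{\rho-s}\lambda_{j,s}$ lets one trade the gap $\rho-s$ for a factor $\underline a^{n(\rho-s)}$; this is exactly the role of the paper's choice of $s'$ close to $\rho$ with $\lambda_{k,s'}\le\lambda_{k,\rho}e^{\delta(\rho-s)}$. Where you diverge is in how this is turned into a dimension bound: the paper estimates $\nu_0(B(z,r))$ directly for \emph{every} $z\in X_0$, using the conformality rule (\ref{conf2}) together with a stopping time (the first $n$ with $\diam f^n(B)>r_0$), and concludes $\nu_0(B(z,r))/r^s\to 0$ for each $s<\rho$; you instead work on cylinders via the product formula (\ref{measurenu}), prove an almost-everywhere lower local dimension $\ge\rho$ (reducing it to the Ces\`aro bound $\liminf\frac1n\sum_{j<n}\log\lambda_{j,\rho}\ge 0$), pass from cylinders to balls with Proposition~\ref{cap}, and finish with the standard Billingsley/mass-distribution principle for measures. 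Both are valid; the paper's route is more self-contained (no probabilistic input, no cylinder-to-ball lemma) and yields the stronger pointwise statement at every point, while yours isolates the deterministic inequality on the $\lambda_{j,\rho}$'s cleanly. One remark: the appeal to Kolmogorov's strong law is superfluous — since $\log\diam Q_{I_n(x)}$ lies deterministically between $n\log\underline a+O(1)$ and $n\log\overline a+O(1)$, your second display already follows with $\bar\beta_n$ replaced by these deterministic bounds, which both simplifies the argument and upgrades your conclusion from $\nu_0$-a.e.\ $x$ to all $x\in X_0$, matching the paper.
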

\begin{proof}
It is obvious that the dimensions of all the measures $\nu_k$ are the same. So, we check (\ref{wymiar_nu}) for $\nu_0$.
Fix an arbitrary $s<\rho$.
It follows from condition $(3)$ in Definition~\ref{df1} that there exists
$r_0<\diam Q$ such that, if $z\in X_k$ then the ball $B(z, r_0)$ is contained in some domain $Q_{k,i}$ (so the map $f_k$ is injective and continuous in $B(z, r_0)$).

Now, take an arbitrary ball $B=B(z,r)$ with $z\in X_0$ and $r<r_0$  and let $n$ be the least iterate such that the diameter of 
$f_{n-1}\circ\dots\circ f_1\circ f_0(B)$  becomes larger than $r_0$.
Then we have, using (\ref{conf2}),

$$\nu_0(B)=\frac {\int _{f^n(B)}|(f^{-n})'|^\rho d\nu_n}{\lambda_{0,\rho}\dots\lambda_{n-1,\rho}}$$

The nominator of the last fraction is just, up to a bounded factor,  $(\diam (B))^\rho \asymp r^\rho\le (\diam (B))^s $.

After neglecting this bounded factor we can write the above ratio as
\begin{equation}\label{r}
(\diam(B))^s\cdot\frac{ \diam(B)^{\rho-s}}{\lambda_{0,\rho}\lambda_{1,\rho}\dots\lambda_{n-1,\rho}}
\end{equation}
Since all the maps $f_k$ are expanding, with expansion factor bounded from below by $\frac{1}{\overline a}>1$ , $n$ is related to $\diam B=2r$, namely  $r\le exp (-n\delta)$ for some positive $\delta$, and  we can estimate the second factor in   (\ref{r}) from above by
\begin{equation}\label{est}
C \exp(-n(\rho-s)\delta)\frac{1}{\lambda_{0,\rho}\lambda_{1,\rho}\dots\lambda_{n-1,\rho}}.
\end{equation}
where $C>0$ is a constant.
Now, choose  $s'\in (s,\rho)$  sufficiently close to $\rho$ so that, for all $k$,
$\lambda_{k,s'}\le \lambda_{k,\rho}\exp(\delta (\rho-s))$. Then
$$ \exp(-n(\rho-s)\delta)\frac{1}{\lambda_{0,\rho}\lambda_{1,\rho}\dots\lambda_{n-1,\rho}}\le\frac{1}{\lambda_{0,s'}\lambda_{1,s'}\dots\lambda_{n-1,s'}}$$
Since $\rho$ was a "transition parameter", $\lambda_{0,s'}\lambda_{1,s'}\dots\lambda_{n-1,s'}\to\infty$ for every $s'<\rho$.
This proves that for all $z\in X_0$
$$\lim_{r\to 0} \frac{\nu_0(B(z,r))}{r^s}=0,$$ which implies that $\dim_H(\nu_0)\ge s$ and, consequently,
$\dim_H(\nu_0)\ge \rho$. 
Together with the evident estimate  $\dim_H(X_0)\le\rho$, this gives $\dim_H(\nu_0)= \rho$. 

This also gives the required argument for  the equality $\rho=\dim_H(X_0)$ (Proposition~\ref{dimension}).

\end{proof}

\section{Hausdorff and harmonic measures}\label{HHM}
In this section we prove Theorem ~B. We start with



\begin{thm}\label{finitemeasure}
Let $(f_n)$ be a sequence of admissible maps and let $X$  the associated Cantor set. There exist a sequence of admissible functions $(\tilde f_n)=\sum_{i=1}^N(\tilde a_{k,i}^{-1}z+\tilde b_{k,i})\1_{\tilde Q_{k,i}}$ such that
\begin{enumerate}
\item $\lim_{k\to\infty}\max_i(|\tilde a_{k,i}-a_{k,i}|+|b_{k,i}-\tilde b_{k,i}|)=0$
\item the associated Cantor set $\tilde X$ satisfies $\dim_{\mathcal H}(\tilde X)=\dim_{\mathcal H}(X)$
\item $0<H_{\dim_{\mathcal H}(\tilde X)}(\tilde X)<\infty$.
\end{enumerate}
\end{thm}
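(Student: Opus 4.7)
The goal is to perturb $(f_k)$ to a tail-close sequence $(\tilde f_k)$ so that the resulting admissible Cantor set $\tilde X$ satisfies $\dim_H\tilde X=\rho$ (where $\rho=\dim_H X$) and $0<H_\rho(\tilde X)<\infty$. The plan is to aim for the concrete criterion that the running product $\tilde P_n:=\prod_{k<n}\sum_{i=1}^N|\tilde a_{k,i}|^{\rho}$ is confined to a fixed compact interval $[c,C]\subset(0,\infty)$. Once this holds, the natural cover of $\tilde X$ by its depth-$n$ cylinders gives $\sum_{I\in\mathcal{E}_n}(\diam\tilde Q_I)^{\rho}=(\diam Q)^{\rho}\tilde P_n\le(\diam Q)^{\rho}C$, hence $H_\rho(\tilde X)<\infty$; and the $\rho$-conformal measure $\tilde\nu_0$ of Section~\ref{conformal}, whose ball-measure satisfies $\tilde\nu_0(B(z,r))\asymp r^{\rho}/\tilde P_{n(r)}\le r^{\rho}/c$ by the estimate derived in the proof of Proposition~\ref{wymiar_nu}, combined with the mass distribution principle yields $H_\rho(\tilde X)>0$.

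To realize this, the construction will be inductive. Fix a slowly vanishing schedule $\delta_k\downarrow 0$. Having chosen $\tilde f_1,\dots,\tilde f_{k-1}$, and hence knowing $\tilde P_{k-1}$, one selects the contractions $\tilde a_{k,i}$ inside the window $|\tilde a_{k,i}-a_{k,i}|\le\delta_k$ so as to steer $\tilde P_k$ toward a prescribed target sub-interval $[c',C']\subset(c,C)$: slightly shrink the contractions when $\tilde P_{k-1}$ lies above $C'$, slightly expand them when it lies below $c'$. The induced multiplicative correction per step is $\tilde\lambda_{k,\rho}/\lambda_{k,\rho}=1+O(\delta_k)$. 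The translations $\tilde b_{k,i}$ are simultaneously adjusted by $O(\delta_k)$ so that the perturbed discs $\tilde Q_{k,i}$ still fit inside $Q$ and remain separated by an annulus of modulus at least $M$; since admissibility (items (1)--(3) of Definition~\ref{df1}) is an open condition in the parameters, this is feasible for all sufficiently large $k$, with the finitely many early stages handled by hand.

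The main obstacle is that the uncorrected increments $\log\lambda_{k,\rho}$ have size of order $1$ while the per-step multiplicative correction $O(\delta_k)$ tends to zero, so a naive attempt to maintain $\tilde P_n\in[c,C]$ seems impossible. The way around this is to organize the corrections into blocks of increasing length $L_n$ with $L_n\delta_n\to\infty$: a uniform correction of size $\delta_n$ over a block of length $L_n$ accumulates into a log-shift of order $L_n\delta_n$, which can absorb any block-wise drift that is sublinear in $L_n$. That the drift of $\log P_n(\rho)$ is indeed sublinear on appropriate scales is the crucial structural input; it is derived by combining the convexity of $s\mapsto\log P_n(s)$ with the threshold characterization of $\rho$ in Proposition~\ref{dimension}, since a persistent linear drift at $s=\rho$ would propagate to a neighborhood of $\rho$ and contradict the very definition of the dimension. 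Dimension preservation $\dim_H\tilde X=\rho$ then follows from Proposition~\ref{dimension} applied to the perturbed sequence, provided the block design also keeps $\log\tilde P_n(s)-\log P_n(s)$ bounded on a neighborhood of $\rho$. The heart of the proof is the simultaneous verification of these requirements via careful bookkeeping.
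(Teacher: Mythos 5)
Your strategy stands or falls with the claim that one can keep $\tilde P_n=\prod_{k\le n}\tilde\lambda_{k,\rho}$ inside a fixed compact interval $[c,C]$ for \emph{all} $n$, and you justify its feasibility by asserting that the drift of $\log P_n(\rho)$ is sublinear ``on appropriate scales'' because persistent linear drift would contradict the characterization of $\rho$ in Proposition~\ref{dimension}. That characterization, however, only constrains $\liminf$'s: it yields (via the comparison $\log P_n(s)=\log P_n(\rho)\pm O((|s-\rho|)n)$) that for every $\epsilon>0$ one has $\log P_n(\rho)\ge -\epsilon n-C_\epsilon$ for all $n$, and that $\log P_n(\rho)\le \epsilon n$ along \emph{some} subsequence; it does not exclude intermittent linear upward drift. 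Concretely, take all scales equal at each level, with $\log\lambda_{k,\rho}=b>0$ for $k\in(n_j,2n_j]$ and $\log\lambda_{k,\rho}=0$ for $k\in(2n_j,K_jn_j]$, where $K_j\to\infty$ very fast; one checks that $\rho$ is still the threshold of Proposition~\ref{dimension}, i.e.\ $\rho=\dim_H X$. Since $|\tilde a_{k,i}-a_{k,i}|\le\delta_k\to 0$ changes each increment $\log\lambda_{k,\rho}$ by $o(1)$, over the block $(n_j,2n_j]$ the perturbed product must also rise by $bn_j(1-o(1))$; hence either $\tilde P_{2n_j}\to\infty$ or $\tilde P_{n_j}\to 0$, and confinement in $[c,C]$ fails for \emph{every} admissible choice of corrections. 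So the ``crucial structural input'' of your block scheme is false, and the bookkeeping you defer cannot be completed: real-time cancellation of the drift by perturbations satisfying item (1) is impossible in general.

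The repair is to aim at much less, namely only at $\liminf_n\tilde P_n\in(0,\infty)$, which is exactly what the paper does. Its Proposition~\ref{Hmeasure} gives the two-sided comparison $\frac1C\liminf_n\prod_{k\le n}\lambda_{k,h}\le H_h(X)\le\liminf_n\prod_{k\le n}\lambda_{k,h}$; the nontrivial lower bound comes from an optimal-coverings argument and is sharper than your mass-distribution bound, which would require $\inf_n\tilde P_n>0$ (a one-sided control that is attainable, unlike the two-sided one). With this tool, the paper treats the case $H_\rho(X)=0$ by a blockwise exponent shift: on $(n_{j-1},n_j]$ it sets $\tilde a_{k,i}=a_{k,i}|a_{k,i}|^{-\varepsilon_j/\rho}$, where $\varepsilon_j$ is the \emph{maximal} solution over $n>n_{j-1}$ of $\prod_{k=n_{j-1}+1}^{n}\lambda_{k,\rho-\varepsilon}=1$; maximality forces $\tilde P_n\ge 1$ for all $n$ with $\tilde P_{n_j}=1$ (so the upward excursions are simply allowed to happen and are absorbed later, only the $\liminf$ being normalized), monotonicity of the $\varepsilon_j$'s gives $\varepsilon_j\to 0$, i.e.\ your item (1), and $0<H_\rho(\tilde X)<\infty$ follows; the case $H_\rho(X)=\infty$ is reduced to the first case by a mild uniform shrinking. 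Note also that item (2) needs no separate argument: $0<H_\rho(\tilde X)<\infty$ already forces $\dim_H\tilde X=\rho$.
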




We can also deduce
\begin{cor}\label{bh}
Let   $\tilde X$ be the admissible Cantor set, constructed in Theorem~\ref{finitemeasure}.
 If $\omega$ and $\tilde \omega$ are the harmonic measures of $X$ and $\tilde X$ respectively, then $\dim\omega=\dim{\tilde\omega}$.
\end{cor}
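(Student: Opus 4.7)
The plan is to deduce the corollary from the stability results for harmonic measure on non-homogeneous Cantor sets established in \cite{Ba2} and \cite{BaHa}. Those papers provide a characterization of $\dim\omega$ for Cantor sets of the present type via an asymptotic formula---of the form of a limit over generations $n$ of a ratio involving an $\omega$-entropy sum and an $\omega$-Lyapunov sum over cylinders $I\in\mathcal{E}_n$---whose value depends only on the tail behaviour of the generating sequence and is insensitive to modifications of the construction which die out at infinity.

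Granting such a formula, I would carry out two comparisons between $(X,\omega)$ and $(\tilde X,\tilde\omega)$. First, condition (1) of Theorem~B gives $|\tilde a_{k,i}-a_{k,i}|\to 0$ as $k\to\infty$. Summing logarithms along a cylinder $I=(i_0,\dots,i_{n-1})$, this yields
\[
\log\diam\tilde Q_I-\log\diam Q_I=\sum_{k=0}^{n-1}\bigl(\log|\tilde a_{k,i_k}|-\log|a_{k,i_k}|\bigr)=o(n),
\]
uniformly in $I\in\mathcal{E}_n$. Second, applying the common Harnack inequality of Remark~\ref{commonharnack} generation by generation, together with the convergence of the linear pieces, the local transition probabilities $\omega(Q_{Ij})/\omega(Q_I)$ and $\tilde\omega(\tilde Q_{Ij})/\tilde\omega(\tilde Q_I)$ differ by a multiplicative factor tending to $1$; iterating along $I$ gives $\log\tilde\omega(\tilde Q_I)-\log\omega(Q_I)=o(n)$, again uniformly. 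Substituting both estimates into the dimension formula, the numerator and denominator are of order $n$ while the discrepancies are $o(n)$, so the ratios share the same limit and $\dim\omega=\dim\tilde\omega$.

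The step I expect to be the main obstacle is establishing the uniform Harnack-type comparison of $\omega$ and $\tilde\omega$ over all cylinders in $\mathcal{E}_n$ simultaneously, which amounts to a continuity statement for harmonic measure under shape perturbations of the generating domains $Q_{k,i}$ propagated through infinitely many generations. This is precisely the technical content imported from \cite{Ba2,BaHa}, so the argument should essentially reduce to verifying that the setup here meets the hypotheses of those references.
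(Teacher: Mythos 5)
Your proposal is correct and takes essentially the same route as the paper: the paper's entire proof of this corollary is to invoke the continuity results of \cite{Ba2} and \cite{BaHa}, which assert that two admissible Cantor sets whose defining coefficients satisfy $\lim_{k\to\infty}\max_i\bigl(|\tilde a_{k,i}-a_{k,i}|+|\tilde b_{k,i}-b_{k,i}|\bigr)=0$ (exactly item (1) of Theorem~\ref{finitemeasure}) have harmonic measures of the same dimension. Your extra entropy/Lyapunov heuristic is not needed (and by itself the generation-by-generation Harnack argument would only give bounded, not $\to 1$, ratios); the uniform comparison you flag as the main obstacle is precisely the content imported from those references, just as in the paper.
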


 In \cite{Ba2} the author prove that if all squares of a given generation $k$ are of equal size $a_k$ (i.e.  $a_{k,i}=a_{k}$, for any $i,j=1,...N$ and for all $k$),  then the  dimension of harmonic measure is a  continuous function with respect to the $\ell^{\infty}$  norm of the sequence $(a_{k})$. More recently, in  \cite{BaHa} the authors extended this result to Cantor sets defined by a sequence of conformal maps. In particular, applied to our case, this implies that if two Cantor sets $X,X'$ are defined by sequences  $(a_{k,i},b_{k,i}),(a_{k,i}',b_{k,i}')$ respectively, such that $\lim_k\max_i\{|a_{k,i}-a_{k,i}'|+|b_{k,i}-b_{k,i}'|\}=0$, then the associated harmonic measures have the same dimension.
 
\

\noindent Thus, Theorem ~\ref{finitemeasure} and Corollary ~\ref{bh} imply Theorem ~B.
  The rest of this section is devoted to the proof of  Theorem ~\ref{finitemeasure}.

\

The following proposition is a refinement of proposition \ref{dimension}.
\begin{prop}\label{Hmeasure}
Let $a_{k,1},\dots a_{k,N}$ be the sequence of 'scales" used in the construction of $X$.
For all $h>0$ then there is a constant $C>$ such that

$$\frac1C\liminf_{n\to\infty}\prod_{k=1}^n\lambda_{k,h}\le H_{h}(X)\le  \liminf_{n\to\infty}\prod_{k=1}^n\lambda_{k,h}$$
\end{prop}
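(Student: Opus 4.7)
My plan is to prove the two inequalities separately, with the upper bound being essentially a one-line application of the natural cylinder cover and the lower bound being the substantive content, handled by a mass distribution argument using the $h$-conformal measure $\nu_0$ built in Section~\ref{conformal}.

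For the upper bound, I would use the canonical covering of $X$ by the domains $\{Q_I : I\in\mathcal{E}_n\}$ at generation $n$. Since these have diameters $\le \overline a^{\,n}\diam Q \to 0$, they form an admissible cover for $H_h$ as $n\to\infty$. From the construction $\diam Q_I = |a_{0,i_0}|\cdots|a_{n-1,i_{n-1}}|\diam Q$, one computes
\[
\sum_{I\in\mathcal{E}_n}(\diam Q_I)^h \;=\; (\diam Q)^h\prod_{k=0}^{n-1}\lambda_{k,h}.
\]
Taking the subsequence realizing the $\liminf$ gives the upper bound (the factor $(\diam Q)^h$ is absorbed by an appropriate normalisation, or into the constant $C$).

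For the lower bound, set $L:=\liminf_n\prod_{k=0}^{n-1}\lambda_{k,h}$; there is nothing to prove if $L=0$. I would use the $h$-conformal measure $\nu_0$ from Definition~\ref{confo} as the mass distribution, and check a Frostman-type estimate: for $z\in X_0$ and $r$ small,
\[
\nu_0\bigl(B(z,r)\bigr)\;\le\; \frac{C_1\, r^h}{L}.
\]
To produce this estimate, for each $z\in X_0$ let $I_m = I_m(z)$ denote the generation-$m$ cylinder containing $z$, and choose $m=m(z,r)$ to be the largest index with $B(z,r)\subset Q_{I_m}$. Proposition~\ref{cap} (applied to the cylinder $I_m J$ of the next $K$ symbols that contains $z$) gives $\dist(z,\partial Q_{I_m})\ge C\diam Q_{I_m}$, whence a straightforward comparison shows $\diam Q_{I_m}$ is comparable to $r$ up to a constant depending only on $M,N,\underline a,K$. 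Then by formula~\eqref{measurenu},
\[
\nu_0\bigl(B(z,r)\bigr)\le \nu_0(Q_{I_m}) = \frac{(\diam Q_{I_m})^h}{(\diam Q)^h\prod_{k=0}^{m-1}\lambda_{k,h}} \le \frac{C_2\, r^h}{\prod_{k=0}^{m-1}\lambda_{k,h}}.
\]
For $r$ small, $m=m(z,r)$ is large, and by definition of $\liminf$ we have $\prod_{k=0}^{m-1}\lambda_{k,h}\ge L/2$ eventually, so $\nu_0(B(z,r))\le 2C_2 r^h/L$. The mass distribution principle (applied to the probability measure $\nu_0$) then yields $H_h(X)\ge L/(2C_2)$, which is the desired lower bound.

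The main technical point is the comparability $\diam Q_{I_m}\sim r$ at the chosen scale and the uniformity of the constants in $m$; both are controlled by Proposition~\ref{cap} and the expansion bound $|a_{k,i}|\ge\underline a$. A smaller bookkeeping issue is that for the finitely many initial indices $m$ one may have $\prod_{k=0}^{m-1}\lambda_{k,h}<L$, but these correspond to a bounded range of radii, which can be discarded when passing to $H_h$ by shrinking $r_0$.
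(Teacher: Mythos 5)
Your proof is correct, but the lower bound goes by a genuinely different route than the paper's. The paper never touches the conformal measure here: it first converts an arbitrary cover of $X$ into a cover by cylinders of comparable diameter (via the separation condition (3) of Definition~\ref{df1}), and then runs a combinatorial exchange argument showing that among covers by cylinders of generation $\le n$ the optimal one consists of \emph{all} cylinders of a single generation --- the key point being that the ratio $\diam(Q_{IJ})^{h}/\diam(Q_I)^{h}$ depends only on the word $J$ and the generation of $I$, not on $I$ itself, so a saving available over one cylinder would be available over every cylinder of that generation. This gives $H_h(X)\ge\frac1C\liminf_n\sum_{I\in\mathcal{E}_n}\diam(Q_I)^h$ directly, with no measure theory beyond the definition of $H_h$. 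You instead use the $h$-conformal measure $\nu_0$ of Section~\ref{conformal} as a mass distribution and establish a Frostman estimate $\nu_0(B(z,r))\le C_2 r^h/L$, which is closer in spirit to the paper's own proof of Proposition~\ref{wymiar_nu}. Both work: your route is arguably more robust (it only needs formula~(\ref{measurenu}) and the geometric separation of Proposition~\ref{cap}, not the exact multiplicativity of the diameters across a generation), while the paper's is more elementary in that it avoids invoking the existence of conformal measures at all.

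Two small repairs to your write-up. First, the distance estimate must be applied one level deeper than you state: by maximality of $m$ there is a point of $B(z,r)$ outside $Q_{I_{m+1}}$, hence $r\ge\dist(z,\partial Q_{I_{m+1}})>C\diam Q_{I_{m+1}}\ge C\underline a\,\diam Q_{I_m}$ by Proposition~\ref{cap}; the inequality $\dist(z,\partial Q_{I_m})\ge C\diam Q_{I_m}$ as you quote it is consistent with $B(z,r)\subset Q_{I_m}$ and by itself gives no upper bound on $\diam Q_{I_m}$ in terms of $r$. Second, besides $L=0$ you should dispose of $L=\infty$: the same estimate with ``$\prod_{k<m}\lambda_{k,h}\ge T$ eventually'' for every $T$ gives $H_h(X)=\infty$, so the stated inequality still holds. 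Neither point affects the substance of the argument.
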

\begin{proof}
Below, we identify, through the  coding,  the subsets of  the Cantor set $X$ and the cylinders on the symbolic space $\Sigma$.
The upper bound of $H_{h}(X)$ is immediate since $\prod_{k=1}^n \lambda_{k,h}$ corresponds to the natural covering of $X$ by its cylinders of the $n$th generation.
\

To prove the lower bound take any ball  $U$ intersecting $X$ and define $I^U$ to be the cylinder of the highest generation $s$ containing $U\cap X$. More precisely, take

$$s(U)=\max\{n\; ; \; \exists I_n^U\in\mathcal{E}_n: U\cap X \subset I_n^U\},$$ and let $I^U=I_{s(U)}^U$.

Clearly, $\diam(U\cap X)\le\diam(I^U)$. On the other hand, $U$ intersects two distinct subcylinders of $I_s^U$. By the modulus  separation condition (3) in definition \ref{df1}, we deduce that  there is a constant $C=C(M,Q)$ such that $\diam(U)\ge \underline a C\diam(I^U)$.

This implies that we can replace all balls $U$ of a given covering $\mathcal R$ of $X$ by cylinders $I_U$ of similar size and still control the variation of the sum $\sum_{U\in{\mathcal R}}\diam(U)^{h}\ge( \underline a C)^{h}\sum_{U\in{\mathcal R}}\diam(I_U)^{h}$.

Since we can only consider coverings with cylinders it is straightforward to conclude that we get optimal coverings using cylinders of the same generation. Indeed,  for $n\in\N$  we say that a covering ${\mathcal R}$ with cylinders is $n$-optimal for $H_{h}$ if
$$\sum_{I\in{\mathcal R}}\diam(I)^{h}=\min\left\{\sum_{{\mathcal R}'}\diam(I)^{h}\;;\; {\mathcal R}' \mbox{ covering with cylinders of generation }\le n\right\}.$$
Take an $n$- optimal covering $\mathcal{R}$,  of minimal cardinality.
Choose $I$ a cylinder in $\mathcal R$ of the minimal generation  and let $I'$ be any cylinder of the same generation not contained in ${\mathcal R}$. There is hence a subcovering ${\mathcal R}\cap I'=\{I'J_1,...,I'J_{\ell}\}$ of $I'$ with subcylinders of $I'$.

Clearly, by the definition of $\mathcal R$ we have $\diam(I')^{h}>\sum_{i=1}^{\ell}\diam(I'J_i)^{h}$ or, equivalently,  $\sum_{i=1}^{\ell}\frac{\diam(I'J_i)^{h}}{\diam(I')^{h}}<1$. But this latter sum is equal to $\sum_{i=1}^{\ell}\frac{\diam(IJ_i)^{h}}{\diam(I)^{h}}$ and hence $\diam(I)^{h}>\sum_{i=1}^{\ell}\diam(IJ_i)^{h}$ which contradicts $I\in{\mathcal R}$.

It follows that all cylinders of the same generation as $I$ are in ${\mathcal R}$, and the proof is complete.
\end{proof}

Let us now turn to the proof of theorem \ref{finitemeasure}.
\begin{proof}
We construct the sequence $\tilde f_n$ satisfying (1) and (3). Recall that  $\rho$ denotes  the dimension of $X$.

Let us distinguish two cases

\noindent {\bf \em Case 1: {\bf $H_{\rho}(X)=0$}}

Since $H_{\rho-\varepsilon}(X)=+\infty$ for all $\varepsilon>0$, proposition \ref{Hmeasure} implies  that
\begin{equation}\label{infinitemeasure}
\lim_{n\to\infty} \prod_{k=1}^n\lambda_{k,\rho-\varepsilon}=+\infty.
\end{equation}

The construction is carried out by induction.

\begin{enumerate}
\item[Step 1.] Define, for $n\in\N$, $\varepsilon_{1,n}$ to be a real number such that
$$\prod_{k=1}^n\lambda_{k,\rho-\varepsilon_{1,n}}=1.$$
Note that $\varepsilon_{1,n}$ does not have to be positive. However, since $H_{\rho}(X)=0$, we have, using Proposition ~\ref{Hmeasure} that $\liminf_n\Pi_{k=1}^n\lambda_{k,\rho}=0$. Thus, $\varepsilon_{1,n}$ is positive for infinitely many $n$'s.

By (\ref{infinitemeasure}) $\displaystyle\lim_{n\to\infty}\varepsilon_{1,n}=0^+$.
We can therefore choose $n_1$ such that  $$\varepsilon_{1,n_1}=\max\{\varepsilon_{1,n}\;;\; n\in\N\}>0$$
For $k=1,...,n_1$ and $i=1,...,N$, put
$$\tilde{a}_{k,i}={a}_{k,i}|{a}_{k,i}|^{-\frac{\varepsilon_{1,n_1}}{\rho}}.$$
This implies : $\prod_{k=1}^{n_1}\left (|\tilde a_{k,1}|^{\rho}+|\tilde a_{k,2}|^{\rho}+\dots +|\tilde a_{k,N}|^{\rho}\right)=1$ and, by the choice of $\varepsilon_{1,n_1}$, $\prod_{k=1}^{n}\left (|\tilde a_{k,1}|^{\rho}+\tilde a_{k,2}|^{\rho}+\dots +|\tilde a_{k,N}|^{\rho}\right)\ge 1$, for $n\le n_1$ .
Remark also that,  $|\tilde{a}_{k,i}|\ge|{a}_{k,i}|$.

\item[Step 2.] Define for $n>n_1$, $\varepsilon_{2,n}$ to be a real number such that
$$\prod_{k=n_1+1}^n\lambda_{k,\rho-\varepsilon_{2,n}}=1.$$

Clearly, $\lim_{n\to\infty}\varepsilon_{2,n}=0$.
As before we can now choose $n_2$ such that $\varepsilon_{2,n_2}=\max\{\varepsilon_{2,n}\;;\; n>n_1\}>0$

Now, we have, for $n\ge n_1$
\begin{equation*}
1=\prod_{k=1}^n\lambda_{k,\rho-\varepsilon_{1,n}}
=\prod_{k=1}^{n_1}\lambda_{k,\rho-\varepsilon_{1,n}} \prod_{k=n_1+1}^n\lambda_{k,\rho-\varepsilon_{1,n}}.
\end{equation*}
Since, for $n>n_1$, $\varepsilon_{1,n}\le\varepsilon_{1,n_1}$ we get
$$\prod_{k=1}^{n_1}\lambda_{k,\rho-\varepsilon_{1,n}}\le \prod_{k=1}^{n_1} \lambda_{k,\rho-\varepsilon_{1,n_1}}=1.$$
This implies that $$\prod_{k=n_1+1}^n\lambda_{k,\rho-\varepsilon_{1,n}}\ge 1$$ and therefore $\varepsilon_{2,n}\le \varepsilon_{1,n}$, for all $n>n_1$.
In particular, $\varepsilon_{2,n_2}\le \varepsilon_{1,n_1}$.

For $k=n_1+1,...n_2$ and $i=1,...,N$ put  $$\tilde{a}_{k,i}={a}_{k,i}|{a}_{k,i}|^{-\frac{\varepsilon_{2,n_2}}{\rho}}.$$

The same reasoning as above now gives  $\prod_{k=1}^{n_2}\left (|\tilde a_{k,1}|^{\rho}+|\tilde a_{k,2}|^{\rho}+\dots +|\tilde a_{k,N}|^{\rho}\right)=1$ and by the choice of $\varepsilon_{2,n_2}$, $\prod_{k=1}^{n}\left (|\tilde a_{k,1}|^{\rho}+|\tilde a_{k,2}|^{\rho}+\dots +|\tilde a_{k,N}|^{\rho}\right)\ge 1$, for $n\le n_1$ .
Again,  $|\tilde{a}_{k,i}|\ge|{a}_{k,i}|$.

\item[Step 3.] Proceed by induction.
\end{enumerate}

Since $\varepsilon_{1,n}\ge\varepsilon_{k,n}$ for all $k,n$ we have that $\lim_k\varepsilon_{k,n_k}=0$. This implies that $|\tilde{a}_{k,i}-{a}_{k,i}|\to 0$ as $k\to \infty$.
Moreover,
$$\liminf_{n\to\infty} \prod_{k=1}^{n}\left (|\tilde a_{k,1}|^{\rho}+|\tilde a_{k,2}|^{\rho}+\dots +|\tilde a_{k,N}|^{\rho}\right)=1,$$ which proves $H_{\rho}(\tilde X)=1$.

\noindent {\bf \em Case 2: {\bf $H_{\rho}(X)=+\infty$}}
This case can be treated in the same way as Case 1. Nevertheless, there is a simple way to deal with it.
Clearly, since  $\rho$ is the dimension of the set,  for all $\delta<1$ we get that $\displaystyle\liminf_n {\delta^n}\prod_{k=1}^n\lambda_{k,\rho}=0$ and therefore we can find a sequence $(\delta_j)_j<1$, $\lim_{j\to\infty}\delta_j= 1$  and a strictly increasing sequence of positive integers $n_j$ such that $$\liminf_{K\to\infty}\prod_{j=1}^K\prod_{\ell=n_j}^{n_{j+1}}\delta_j^{n_{j+1}-n_j}\lambda_{\ell,\rho}=0.$$

We can now modify the sequence $(a_{k,i})$, by putting for all $j\in\N$ and $k=n_j+1,...,n_{j+1}$
$$ a_{k,i}'=\delta_j^{\frac{1}{\rho}} a_{k,i},$$ the sequence $(b_{k,i})$ is left unchanged.
This yields a Cantor set $X'$ (of the same Hausdorff dimension) satisfying
 $\displaystyle\lim_k\max_i\{|a_{k,i}-a_{k,i}'|\}=0$ and
$\displaystyle \liminf_{n\to\infty}\prod_{k=1}^n\lambda_{k,\rho}'=0=H_\rho(X')$,
which puts the situation back to case one.
\end{proof}

\section{Green's functions and capacity}\label{green}
Let $X=X_0$ be an admissible Cantor set, and let $(X_k)_{k=0}^\infty$ be the associated  sequence of consecutive Cantor sets, according to (\ref{seqk}). 
Denote by $\omega_k$ the harmonic measure on the Cantor set $X_k$, evaluated at $\infty$.
Denote by $G_k$ the Green's function in $\mathbb{C}\setminus X_k$.
Note that all the sets $X_k$ are regular in the sense of Dirichlet, thus each function $G_k$ has a continuous extension to the whole plane $\mathbb{C}$ and ${G_k}_{|X_k}=0$.
We have  $\omega_k=\Delta G_k$.

\

Given an admissible Cantor set $X$, denote by $\mathcal{G}_X$ the family of all functions $F:Q\to \mathbb{R}$ such that $F$ is continuous in $Q$, $F_{|Q\setminus X}$ is harmonic and strictly positive, while $F_{|X}=0$.
Obviously, such a function is subharmonic in $Q$ and we require, additionally, that for $F\in\mathcal{G}_X$, the measure $\mu_F= \Delta(F)$ is  normalized, i.e $\mu_F(X)=1$.

We introduce the following operators in a way similar to those proposed in \cite{zd1}.

\begin{defn}
Let $\mathcal{P}_k:\mathcal{G}_{X_k}\to \mathcal{G}_{X_{k+1}}$ be defined as

$$\mathcal{P}_k(F)(x)=\sum_{y\in f_k^{-1}(x)} F(y)$$
\end{defn}

Recall the notation: if $\mu$ is a measure in $X_k$ then $(f_k)_*\mu$ is the image of the measure
$\mu$ under $f_k$; in other words $(f_k)_*\mu=\mu\circ f_k^{-1}$.

\begin{prop}\label{pot}
If $F\in \mathcal{G}_{X_k}$  then

$$(f_k)_*(\mu_F)=\Delta\mathcal{P}_k(F).$$
\end{prop}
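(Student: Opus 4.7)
The plan is to reduce the claim to the well-known transformation rule of the distributional Laplacian under a holomorphic affine map. For each $i=1,\dots,N$ I would denote by $g_{k,i}\colon Q\to Q_{k,i}$ the inverse branch of $f_k$ on $Q_{k,i}$; explicitly, $g_{k,i}(x)=a_{k,i}(x-b_{k,i})$, a similitude with multiplier $a_{k,i}$. By definition of $\mathcal{P}_k$,
$$\mathcal{P}_k(F)(x)=\sum_{i=1}^N F(g_{k,i}(x)),\qquad x\in Q,$$
since the $N$ preimages of $x$ under $f_k$ are precisely the $g_{k,i}(x)$.

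Next, I would compute $\Delta(F\circ g_{k,i})$ in the sense of distributions. The computation I have in mind is the change of variables: for any test function $\phi\in C_c^\infty(Q)$,
$$\int_Q (F\circ g_{k,i})\,\Delta\phi\,dx
=\int_{Q_{k,i}} F(y)\,(\Delta\phi)(f_k(y))\,|a_{k,i}|^{-2}\,dy
=\int_{Q_{k,i}} F\,\Delta(\phi\circ f_k)\,dy,$$
where I used the Jacobian $|a_{k,i}|^{-2}dy=dx$ of $g_{k,i}^{-1}=f_k|_{Q_{k,i}}$ together with the conformal identity $\Delta(\phi\circ f_k)=|a_{k,i}|^{-2}(\Delta\phi)\circ f_k$ on $Q_{k,i}$, so that the two factors cancel. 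Interpreting the right-hand side via $\mu_F=\Delta F$ and using that $\phi\circ f_k$ extends smoothly across $\partial Q_{k,i}$ (it is the restriction of $\phi$ composed with the globally defined similitude $z\mapsto a_{k,i}^{-1}z+b_{k,i}$), the pairing becomes
$$\int_{Q_{k,i}}(\phi\circ f_k)\,d\mu_F=\int_Q\phi\,d\bigl((f_k)_*(\mu_F\!\restriction\! Q_{k,i})\bigr).$$
Summing over $i$ and using that $\supp\mu_F=X_k\subset\bigsqcup_i Q_{k,i}$ with the $Q_{k,i}$ disjoint, I obtain
$$\langle\Delta\mathcal{P}_k(F),\phi\rangle=\sum_{i=1}^N\int_Q\phi\,d\bigl((f_k)_*(\mu_F\!\restriction\! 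Q_{k,i})\bigr)=\int_Q\phi\,d\bigl((f_k)_*\mu_F\bigr),$$
which is the stated identity.

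The one point requiring some care is the distributional change of variables when $F$ is only continuous and subharmonic (not $C^2$), so that $\Delta F$ is a genuine measure supported on $X_k$. I would handle this either by approximating $F$ by a decreasing sequence of smooth superharmonic functions (obtained by standard mollification, so that $\Delta F_\varepsilon\to\mu_F$ weakly) and passing to the limit in the identity above, or directly by duality: the operator ``compose with the similitude $g_{k,i}$'' commutes with $\Delta$ up to the conformal factor $|a_{k,i}|^{2}$, and the Jacobian $|a_{k,i}|^{-2}dx$ is precisely its reciprocal, which makes the pairing above rigorous for any distribution. This is the only mildly delicate step; everything else is bookkeeping of pushforwards, made clean by the fact that the $Q_{k,i}$ are disjoint and each $g_{k,i}$ is globally a similitude.
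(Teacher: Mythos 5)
Your argument is correct and is essentially the paper's own proof: pair $\mathcal{P}_k(F)$ with $\Delta\phi$, change variables branch by branch with Jacobian $|f_k'|^2=|a_{k,i}|^{-2}$, use $\Delta(\phi\circ f_k)=|f_k'|^2\,(\Delta\phi)\circ f_k$, and recognize the resulting pairing $\sum_i\int_{Q_{k,i}}(\phi\circ f_k)\,d\mu_F$ as $(f_k)_*(\mu_F)(\phi)$. The mollification you flag as delicate is not actually needed: all integrals involved are Lebesgue integrals of continuous functions, and the final identification is just the definition of $\mu_F=\Delta F$ tested against the function $\phi\circ f_k|_{Q_{k,i}}$, which is smooth with compact support in $Q_{k,i}$ (being $\phi\circ$ a similitude, with support $g_{k,i}(\supp\phi)$) and hence a legitimate test function after extension by zero.
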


\begin{proof} Let $\phi\in C_0^\infty(Q)$ be a test function.
Then
$$
\aligned
&\Delta\mathcal{P}_k(F)(\phi)=\int_Q\Delta\phi\cdot \mathcal{P}_k(F)= 
\sum_{i=1}^N\int_{Q_{k,i}}\Delta\phi\circ f_k\cdot F\cdot |f_k'|^2\\
&=\sum_{i=1}^N\int_{Q_{k,i}}\Delta(\phi\circ f_k)\cdot F=\int_Q\phi\circ f_kd\mu_F=(f_k)_*(\phi),
\endaligned
$$
which proves the statement.
\end{proof}

Remark~\ref{commonharnack} and the Maximum Principle give the following observation (see also \cite{MV}, \cite{zd2}).

\begin{prop}\label{equivmeasures}
There exists a universal  constant $D>0$ such that if $X$ is an admissible Cantor set and $F_1, F_2\in\mathcal{G}_X$ then
 the measures $\mu_{F_1}$, $\mu_{F_2}$  are equivalent, with density bounded by $D$.
\end{prop}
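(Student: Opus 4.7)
The plan is to combine the universal Harnack estimate of Remark~\ref{commonharnack} with a straightforward maximum-principle argument applied to an affine combination of $F_1$ and $F_2$. Let $\gamma\subset Q\setminus\bigcup_j Q_{0,j}$ be the separating curve furnished by Remark~\ref{commonharnack} for the first-level admissible family $(Q_{0,1},\dots,Q_{0,N})$. Since $\gamma$ is disjoint from $X$, each $F_i$ is positive and harmonic in a neighbourhood of $\gamma$, so Remark~\ref{commonharnack} gives $\sup_\gamma F_i\le C\inf_\gamma F_i$ for $i=1,2$ with $C$ universal, which immediately implies
$$\sup_\gamma(F_1/F_2)\le C^2\inf_\gamma(F_1/F_2).$$
Set $\alpha:=\inf_\gamma(F_1/F_2)$ and $\beta:=\sup_\gamma(F_1/F_2)$, so $\beta/\alpha\le C^2$.

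Next I would let $U$ denote the component of $Q\setminus\gamma$ containing $X$ and consider $w:=F_1-\alpha F_2$. It is continuous on $\overline U$, vanishes on $X$, is harmonic on $U\setminus X$, and is non-negative on $\gamma$. The minimum principle applied on the open set $U\setminus X$ (whose topological boundary lies in $\gamma\cup X$) forces $w\ge 0$ throughout $U$. Moreover $w$ is subharmonic on $U$: the submean-value inequality is automatic at points of $U\setminus X$ by harmonicity, and at $x\in X$ it follows from $w(x)=0$ together with $w\ge 0$ on a neighbourhood of $x$. Taking distributional Laplacians yields $\mu_{F_1}-\alpha\mu_{F_2}\ge 0$, and the symmetric argument applied to $\beta F_2-F_1$ gives $\beta\mu_{F_2}-\mu_{F_1}\ge 0$, so altogether
$$\alpha\,\mu_{F_2}\le\mu_{F_1}\le\beta\,\mu_{F_2}\quad\text{on }X.$$

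To conclude, the normalization $\mu_{F_1}(X)=\mu_{F_2}(X)=1$ built into the definition of $\mathcal{G}_X$ forces $\alpha\le 1\le\beta$ upon evaluating at $E=X$; combined with $\beta/\alpha\le C^2$ this gives $\alpha\ge 1/C^2$ and $\beta\le C^2$, so the Radon--Nikodym density satisfies $1/C^2\le d\mu_{F_1}/d\mu_{F_2}\le C^2$ and the proposition holds with $D=C^2$. The main subtle point is the step from the pointwise bound $F_1\ge\alpha F_2$ on $U$ to the measure inequality $\mu_{F_1}\ge\alpha\mu_{F_2}$ on $X$: it relies essentially on $w$ being subharmonic \emph{across} $X$, which is what makes $\Delta w$ a positive measure rather than merely a signed one, and this in turn uses both the nonnegativity of $w$ on $U$ and the fact that $w$ attains the minimum value $0$ along $X$.
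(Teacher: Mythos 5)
Your proof is correct, and it runs on the same circle of ideas as the paper's: the uniform Harnack estimate of Remark~\ref{commonharnack} on the separating curve $\gamma$, the maximum principle in the region between $\gamma$ and $X$, and the normalization $\mu_F(X)=1$. The difference is in how the normalization enters. The paper first compares each $F\in\mathcal{G}_X$ with the Green's function $G$: since both $\mu_F$ and $\omega$ are probability measures, the ratio $G/F$ must take values both $\le 1$ and $\ge 1$ somewhere on $\gamma$, which together with Harnack yields the universal pointwise bounds $1/C\le F\le C$ on $\gamma$ for \emph{every} $F\in\mathcal{G}_X$ (a fact the paper reuses later, e.g.\ in the proof of the uniform decay of Green's functions), and only then passes to the measures. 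You instead compare $F_1$ with $F_2$ directly, obtain $\alpha\mu_{F_2}\le\mu_{F_1}\le\beta\mu_{F_2}$ with $\beta/\alpha\le C^2$, and invoke the normalization at the level of the measures to force $\alpha\le 1\le\beta$; this avoids the Green's function as an intermediary at the cost of not producing the uniform bound on $F|_\gamma$. A genuine merit of your write-up is that you spell out the step the paper compresses into ``using the Maximum Principle again'': the passage from the pointwise inequality $F_1\ge\alpha F_2$ (with both functions vanishing on $X$) to the measure inequality $\mu_{F_1}\ge\alpha\mu_{F_2}$, via the observation that the nonnegative difference, vanishing on $X$ and harmonic off $X$, is subharmonic across $X$, so its distributional Laplacian is a positive measure supported on $X$. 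Both routes give the universal constant $D=C^2$.
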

\begin{proof}
Let $F\in\mathcal{G}_X$, let $G$ be the standard Green's function for $X$.
Let $\gamma(X)$ be the curve described in Remark~\ref{commonharnack}. Since   $\mu_F$  is a   probability measure, the ratio $\frac{G(x)}{F(x)}$ cannot be larger than $1$ everywhere in $\gamma(X)$. Indeed, if  $\frac{G(x)}{F(x)}\ge L>1$ in $\gamma$ then the  Maximum Principle implies that the  inequality $G(x)\ge L F(x)$  holds everywhere in $Q$. This would  imply $\mu(X)\ge L\omega(X)=1$, a contradiction. By the same reason, the above ratio cannot be smaller than $1$ everywhere in $\gamma(X)$.
 Together with Remark~\ref{commonharnack} this implies that there exists a constant $C>0$,
independent of both the set $X$ and $F\in\mathcal{G}_X$ such that, for an arbitrary function $F\in\mathcal{G}_X$, $\frac{1}{C}\le F_{|\gamma(X)}\le C$.   Using  the Maximum Principle again, we conclude that   $\frac{1}{C^2}\le \frac{d\mu_{F_1}}{d\mu_{F_2}}\le C^2$.
\end{proof}

As usually, we denote by ${\rm Cap}(X)$ the logarithmic capacity of $X$.
Let us note the following.
\begin{prop}
There exists a constant $\kappa>0$, depending only on $M,\overline a, \underline a, Q, N$,   such that, if
$X$ is an admissible Cantor set then ${\rm Cap}(X)>\kappa $.
\end{prop}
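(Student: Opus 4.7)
The plan is to exhibit, for any admissible Cantor set $X$, a probability measure $\mu$ on $X$ whose logarithmic energy
$$I(\mu)=\iint\log\frac{1}{|z-w|}\,d\mu(z)\,d\mu(w)$$
is bounded by a quantity depending only on $M,\underline a,\overline a,N,Q$. Since $\text{Cap}(X)\ge \exp(-I(\mu))$ by the Frostman/energy characterization of logarithmic capacity, any such uniform energy bound immediately gives a uniform lower bound on capacity.

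For $\mu$ I take the \emph{balanced Bernoulli measure}: assign mass $N^{-n}$ to every length-$n$ symbolic cylinder; consistency under refinement ($N^{-n}=\sum_{j=1}^N N^{-(n+1)}$) produces a Borel probability on $\Sigma$, and via the coding $C_0$ a Borel probability supported on $X$. The core step is to prove a uniform Frostman estimate
$$\mu(B(z,r))\le C_0\,r^\alpha\qquad (z\in X,\ r>0),$$
with $\alpha=\dfrac{\log N}{\log(1/\underline a)}>0$ and $C_0$ depending only on the admissibility parameters. To prove it I use Proposition \ref{cap}: for every $z\in X$ and every $n$, the cylinder $I_{n+K}(z)$ is a length-$K$ subcylinder of $I_n(z)$ containing $z$, hence $\dist(z,\partial Q_{I_n(z)})>C'\diam Q_{I_n(z)}$ for a universal $C'>0$. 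Letting $n$ be the largest integer with $r\le C'\diam Q_{I_n(z)}$ forces $B(z,r)\subset Q_{I_n(z)}$, whence $\mu(B(z,r))\le \mu(I_n(z))=N^{-n}$. The defining inequality at $n+1$ gives $r>C'\diam Q_{I_{n+1}(z)}\ge C'\underline a^{n+1}\diam Q$, which translates (after taking logarithms) into $N^{-n}\le C_0 r^\alpha$ for an explicit $C_0$. For $r>C'\diam Q$ one trivially has $\mu\le 1\le C_0 r^\alpha$.

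The energy bound is then a standard layer-cake computation. Since $\log(1/|z-w|)\le 0$ when $|z-w|\ge 1$,
$$U^\mu(z)=\int\log\frac{1}{|z-w|}\,d\mu(w)\le \int_0^\infty\mu(B(z,e^{-s}))\,ds\le \int_0^\infty C_0 e^{-\alpha s}\,ds=\frac{C_0}{\alpha}.$$
Integrating in $z$ yields $I(\mu)\le C_0/\alpha$, so
$$\text{Cap}(X)\ge e^{-I(\mu)}\ge \exp(-C_0/\alpha)=:\kappa,$$
a positive constant depending only on $M,\underline a,\overline a,N,Q$, as required.

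I do not expect any genuine obstacle: Proposition \ref{cap} is already in hand and is precisely the place where the modulus-$M$ admissibility condition is used decisively; the rest is the classical energy-capacity duality together with a routine layer-cake. If one wishes to avoid Proposition \ref{cap}, the same argument runs with the $h$-conformal measure $\nu_0$ of Section \ref{conformal} for any fixed $h\in(0,\log N/\log(1/\underline a))$, since then $\lambda_{k,h}\ge N\underline a^h>1$ uniformly, and the estimate (\ref{est}) in the proof of Proposition \ref{wymiar_nu} produces a Frostman bound with uniform constants — but the Bernoulli-measure argument above is the most economical.
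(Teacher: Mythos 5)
Your proof is correct and follows essentially the same route as the paper: both exhibit an explicit probability measure on $X$ (yours is the balanced Bernoulli measure, i.e.\ the $h$-conformal measure with $h=0$, while the paper takes the $h$-conformal measure for a small $h>0$ with $N\underline a^{h}>1$), both use Proposition~\ref{cap} as the geometric separation input, and both conclude via the energy characterization $\mathrm{Cap}(X)\ge \exp(-I(\mu))$. The only cosmetic difference is bookkeeping: you derive a uniform Frostman bound $\mu(B(z,r))\le C_0 r^{\alpha}$ and integrate by the layer-cake formula, whereas the paper bounds the potential directly by summing over the cylinder shells $I_n(z)\setminus I_{n+1}(z)$.
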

\begin{proof}
One can assume that $\diam Q=1$.
Fix $h$ positive and so small that $P=N\underline a^h>1$. We shall use the measure $\nu_h$ to estimate the capacity from below.
Then, using  ~(\ref{measurenu}) we get, for every cylinder $I$,
$$\nu_h(I)\le (\diam Q_I)^h\frac{1}{P^n}< (\diam Q_I)^h$$

The logarithimic potential of the measure $\nu_h$ can be estimated pointwise. Let $z\in X$; denote by $I_n(z)$ the cylinder containing $x$
(under the identification of $X$ with the symbolic space $\Sigma$). Then, using Proposition~\ref{cap}, we get
$$
\aligned
&U_{\nu_h}(z)=\int \log\frac{1}{|z-w|}d\nu_h(w)
\le\sum_n\nu_h(I_n(z))\cdot \inf _{w\in I_n(z)\setminus I_{n+1}(z)}\log\frac{1}{|z-w|}\\
&\le \sum_n\nu_h(I_n(z)) \log\frac{1}{C\diam Q_{I_{n+1}(z)}}\le\sum_n \diam Q_{I_n(z)} \log\frac{1}{C\diam Q_{I_{n+1}(z)}}
\endaligned
$$
Since $\diam Q_{I_{n}(z)}<\overline a^n $ and $ \diam Q_{I_{n}(z)}>\underline a^n$, this easily gives a common bound on $U_{\nu_h}(z)$.
Consequently, we get a common bound for the energy function:
$$I(\nu_h)=\int U_{\nu_h}(z)d\nu_h(z)\le I_0<\infty$$
and
${\rm cap}(X)\ge \exp (-I_0).$
\end{proof}


\begin{prop}[Uniform decay of Green's functions]
There exist   constants $0<\gamma<1$, $C>0$ (depending on $Q, M, \underline a, \overline a, N$) such that, for every admissible Cantor set $X$, for an arbitrary function $F\in \mathcal G_X$, and an
arbitrary cylinder $I$ of length $n$,
\begin{equation}\label{decay1}
\sup_{z\in Q_I}F(z)\le C \gamma^n
\end{equation}
\end{prop}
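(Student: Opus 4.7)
The plan is to establish a \emph{universal one-step decay} and then iterate by affine rescaling. The key one-step claim I aim for is: there exists $\alpha\in(0,1)$, depending only on $Q, M, \underline a, \overline a, N$, such that for every admissible Cantor set $Y$, every $G\in\mathcal{G}_Y$, and every $i\in\{1,\dots,N\}$,
\begin{equation*}
\sup_{Q_{0,i}(Y)} G \;\le\; \alpha \sup_{\gamma(Y)} G. \qquad (\ast)
\end{equation*}
Assuming $(\ast)$, the theorem follows by induction on $n=|I|$. Writing $I=I'j_n$ and letting $\phi_{I'}=f_{n-2}\circ\dots\circ f_0\colon Q_{I'}\to Q$ be the corresponding affine similitude, the pullback $\tilde F=F\circ\phi_{I'}^{-1}$ is continuous on $Q$, harmonic on $Q\setminus X_{n-1}$, vanishes on $X_{n-1}$, and is strictly positive elsewhere; thus, up to a multiplicative normalization irrelevant for the homogeneous inequality $(\ast)$, $\tilde F\in\mathcal G_{X_{n-1}}$, and $\phi_{I'}(Q_I)=Q_{0,j_n}(X_{n-1})$. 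Applying $(\ast)$ to $\tilde F$ and pulling back yields $\sup_{Q_I} F\le\alpha\sup_{\phi_{I'}^{-1}(\gamma(X_{n-1}))}F\le\alpha\sup_{Q_{I'}}F$. Iterating down to $n=1$ and using the universal bound $\sup_{\gamma(X)}F\le C$ (which follows from the normalization $\mu_F(X)=1$ exactly as in the proof of Proposition~\ref{equivmeasures}), we obtain $\sup_{Q_I}F\le C\alpha^n$, i.e.\ the desired estimate with $\gamma=\alpha$.

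To prove $(\ast)$: let $U$ be the Jordan domain enclosed by $\gamma(Y)$ that contains $X_Y$. Since $G$ is harmonic on $U\setminus X_Y$ and continuous on $\overline U$ with $G|_{X_Y}=0$, the standard representation of harmonic functions via harmonic measure yields
\begin{equation*}
G(z)=\int_{\gamma(Y)} G\,d\omega_z^{U\setminus X_Y},\qquad z\in U\setminus X_Y.
\end{equation*}
Hence $G(z)\le\sup_{\gamma(Y)}G\cdot(1-u(z))$, where $u(z)=\omega_z^{U\setminus X_Y}(X_Y)$, so $(\ast)$ reduces to producing a universal $\beta>0$ with $u(z)\ge\beta$ for every $z\in\bigcup_i Q_{0,i}(Y)$, uniformly over admissible $Y$.

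This uniform lower bound on the hitting probability $u(z)$ is the main obstacle. The key inputs are the capacity estimate $\operatorname{Cap}(X_Y\cap Q_{0,i}(Y))\ge\kappa\underline a\diam Q$ (which follows from the preceding proposition applied to the rescaled admissible Cantor set via the similitude $\phi_{0,i}$) and the uniform separation $\dist\bigl(\bigcup_i Q_{0,i}(Y),\gamma(Y)\bigr)\ge\delta>0$ afforded by the modulus-$M$ annulus. I would invoke a Beurling-type estimate: if $E\subset D(w,R)$ satisfies $\operatorname{Cap}(E)\ge cR$, then for $z\in D(w,R/2)$ the harmonic measure $\omega_z^{D(w,R)\setminus E}(E)$ is bounded below by a positive function of $c$. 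Applying this to $E=X_Y\cap Q_{0,i}(Y)$ and combining with the monotonicity $\omega_z^{U\setminus X_Y}(X_Y)\ge\omega_z^{D\setminus E}(E)$ for a suitable disk $D\subset U$ containing $z$ and $E$ gives the required lower bound for $z$ in a central portion of $Q_{0,i}(Y)$. Points $z$ close to $\partial Q_{0,i}(Y)$ can be handled either by re-centering the disk on a nearby point of $X_Y$ (every point of $Q_{0,i}(Y)$ is within $\diam Q_{0,i}(Y)$ of $X_Y$) or, if cleaner, by a normal-families / compactness argument on the compact space of admissible Cantor sets with the same admissibility constants.
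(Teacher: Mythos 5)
Your proposal is correct and follows essentially the same route as the paper's proof: a uniform bound for $F$ at the first generation coming from the normalization $\mu_F(X)=1$ via the Harnack curve (as in Proposition~\ref{equivmeasures}), a one-step contraction factor uniform over admissible sets obtained by affine rescaling together with the capacity lower bound $\operatorname{Cap}\ge\kappa$ and the modulus-$M$ separation (the paper's appeal to the harmonic-measure-vs-capacity estimate of \cite{GM} is exactly your Beurling-type input), and then induction plus the Maximum Principle. The only differences are cosmetic: the paper rescales the pair $(Q_I,X_I)$ by $f^n$ and estimates $\omega(z,\partial Q_I,Q_I\setminus X_I)$ at $z\in\partial Q_{IJ}$, whereas you rescale the function and phrase the one-step decay relative to the curve $\gamma(Y)$.
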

\begin{proof}
First, notice that there is a common bound on  $F_{|\gamma(X)}$,  over all admissible sets $X$, and all functions $F\in {\mathcal{G}_X}$ (see the proof of 
Proposition~\ref{equivmeasures}).
This implies  that there exists a constant $C>0$ such that $F_{Q_I}\le C$ for every cylinder $I$ of length $1$.

Now, let $I$ be an arbitrary cylinder of length $n$ and $IJ$ its subcylinder of length $n+1$. Let $z\in\partial Q_{IJ}$. Put $X_I=Q_I\cap X$.
Then

$$F(z)=\int_{\partial Q_I}F(w)\omega(z, \partial Q_I,Q_I\setminus X_I).$$
Thus,
\begin{equation}\label{decay}
\sup_{z\in\partial Q_{IJ}}F(z)\le \sup_{w\in\partial Q_I}F(w)\cdot \omega(z,\partial Q_I, Q_I\setminus X_I)
\end{equation}

It remains to check that
\begin{equation}\label {beta}
\omega(z,\partial Q_I, Q_I\setminus X_I)<\gamma
\end{equation}
for some $0<\gamma<1$. This follows from the standard estimate (from below) of the harmonic measure by the capacity (see, e.g, \cite{GM}, Theorem 9.1).

Indeed, since the required estimate is invariant under conformal maps, and the pair $(Q_I,X_I)$ is mapped under $f^n$ onto the pair  $(Q, X_n)$,
 it is enough to prove that there exists $\gamma\in (0,1)$ such that, for an arbitrary admissible Cantor set $X$,
$$\omega(z,X, Q\setminus X)>1-\gamma$$
 where $z\in Q_J$ and $|J|=1$. Since we have
the estimate of the capacity ${\rm Cap}(X)$ from below by $\kappa$, and since the set $X$ is separated from $\partial Q$ by some  annulus with
modulus larger than $M$, the estimate (\ref{beta}) follows.
Thus, (\ref{decay}) implies, by induction, that, if $I$ is a cylinder of length $n$ then 
$$\sup_{z\in \partial Q_I}F(z)<C\gamma^n.$$
 The required estimate on $\sup_{z\in Q_I}F(z)$ follows now from the Maximum Principle.
\end{proof}

\section { Sequences and convergence of admissible Cantor sets}
Recall that $Q$ is a fixed Jordan domain.
Recall that a non-homogeneous Cantor set is given by a sequence of maps
$f_k(z)=\sum_{i=1}^N(a_{k,i}^{-1}z+b_{k,i})\1_{Q_{k,i}}$, where $a_{k,i}^{-1}Q_{k,i}+b_{k,i}=Q$ and $k=0,1,2\dots$.  Obviously, $f_k$ is $N$-to-one and the branches  $(f_k)^{-1}_i:Q\to Q_{k,i}$ are given by
$(f_k)^{-1}_i(w)=a_{k,i}(w-b_{k,i})$.

Assume that we are given an infinite  sequence of admissible Cantor sets $X^{(0)}, X^{(1)}, \dots, $ $X^{(n)},\dots$ 

Let us note the following:

\begin{prop}\label{limitcantor1}
Let  $X^{(0)}, X^{(1)}, \dots X^{(n)},\dots$ be a sequence of admissible Cantor sets  of the same Hausdorff dimension $\rho$. For each $n$ denote by  $(^nf_k)_{k=0}^\infty$, the sequence of maps defining the set $X^{(n)}$.

Let $h>0$ be given (not necessarily equal to the Hausdorff dimension of the sets $X^{(n)}$).
For every $n$, let $\{\nu^{(n)}_k\}_{k=0}^\infty$ be the sequence of $h$-conformal measures associated to the set $X^{(n)}$.

Then one can extract a subsequence $n_s$ so that, for all  $k\in \mathbb{N}$, and all $i=1,\dots N$ the following holds:
\begin{enumerate}
\item{}
The limit  ${\lim_{s\to\infty}} (^{n_s}f_k)^{-1}_i=({}^\infty f_k)^{-1}_i$ exists (which, equivalently, means simply that for all $k$  the coefficients of the piecewise linear map $^{n_s}f_k$ converge to the coefficient of the piecewise linear map ${}^\infty f_k$). The Cantor set $X^{(\infty)}$, built with the maps  ${}^\infty f_k$ is admissible.
\item{}  For all $k\ge 0$, the following (weak-*) limits exist:
$$\nu_k^{(n_s)}\to \nu_k^{(\infty)}$$
and  $\nu_k^{(\infty)}$ is the  system of $h$- conformal measures for $X^{(\infty)}$. The corresponding normalizing factors are
$$\lambda_{k,h}^\infty= \lim_{s\to\infty}\lambda_{k,h}^{n_s}.$$
\end{enumerate}
\end{prop}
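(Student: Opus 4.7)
The plan is to combine a diagonal extraction of coefficients with the explicit formula (\ref{measurenuk}) for the $h$-conformal measure on cylinders. First, the set of admissible coefficients is compact: condition~(1) puts $a_{k,i}^{(n)}$ in the annulus $\{\underline a\le|z|\le\overline a\}$, and condition~(2) together with $Q_{k,i}^{(n)}=a_{k,i}^{(n)}(Q-b_{k,i}^{(n)})\subset Q$ keeps $b_{k,i}^{(n)}$ in a bounded set. A Cantor diagonal procedure over the countable collection of pairs $(k,i)$ yields a subsequence $n_s$ along which $a_{k,i}^{(n_s)}\to a_{k,i}^{(\infty)}$ and $b_{k,i}^{(n_s)}\to b_{k,i}^{(\infty)}$ for every $k,i$; these define the candidate limit maps $({}^\infty f_k)$.

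Next I verify admissibility of $X^{(\infty)}$. Condition~(1) passes immediately. The uniform separation provided by condition~(3) and Remark~\ref{commonharnack} gives a common positive lower bound for $\dist(\partial Q,\bigcup_j Q_{k,j}^{(n)})/\diam Q$, so condition~(2) survives. For condition~(3), the compact families $K_k^{(n_s)}=\mathrm{cl}(\bigcup_j Q_{k,j}^{(n_s)})$ converge to $K_k^{(\infty)}$ in Hausdorff distance (since the affine similitudes converge), and a normal family argument applied to conformal representations of the doubly connected components of $Q\setminus K_k^{(n_s)}$ separating $\partial Q$ from $K_k^{(n_s)}$ yields an annulus of modulus at least $M$ in the limit. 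This is the step requiring the most care; if strictness in~(3) is an issue, one may absorb a small loss by beginning with a slightly larger margin.

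Given the limit maps, for each finite cylinder $I\in\mathcal E_m$ the branch of $(f_{m-1+k}^{(n_s)}\circ\dots\circ f_k^{(n_s)})^{-1}$ corresponding to $I$ converges uniformly on $\overline Q$ to the limit branch, so $Q_I^{(n_s)}\to Q_I^{(\infty)}$ in Hausdorff distance. The normalizing factors $\lambda_{k,h}^{(n_s)}=\sum_i|a_{k,i}^{(n_s)}|^h$ clearly converge to $\lambda_{k,h}^{(\infty)}$, and formula (\ref{measurenuk}) gives
\[
\nu_k^{(n_s)}(Q_I^{(n_s)})=\frac{\prod_{j=0}^{m-1}|a_{k+j,i_j}^{(n_s)}|^h}{\lambda_{k,h}^{(n_s)}\cdots\lambda_{m-1+k,h}^{(n_s)}}\longrightarrow \nu_k^{(\infty)}(Q_I^{(\infty)}),
\]
with $I=(i_0,\dots,i_{m-1})$.

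Finally, I upgrade this cylinder-wise convergence to weak-$*$ convergence. By Prokhorov (applied to probability measures on the compact set $\overline Q$) extract a further subsequence with $\nu_k^{(n_s)}\to\nu_k^*$ weakly. A continuous test function $\phi$ on $\overline Q$ is uniformly continuous, and since $\diam Q_I\le C\overline a^m$ for $|I|=m$, I approximate $\phi$ to within $\varepsilon$ by a function constant on each cylinder of a sufficiently large length $m$. The resulting finite sum passes to the limit by the displayed line above, giving $\int\phi\,d\nu_k^*=\int\phi\,d\nu_k^{(\infty)}$, hence $\nu_k^*=\nu_k^{(\infty)}$. Since this limit is independent of the Prokhorov subsequence chosen, the whole sequence $\nu_k^{(n_s)}$ converges weakly to $\nu_k^{(\infty)}$, which is the $h$-conformal measure of $X^{(\infty)}$ by construction. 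The main obstacle is the admissibility of the limit (preserving the modulus condition); everything else reduces to elementary compactness and the explicit cylinder formulas for $h$-conformal measures.
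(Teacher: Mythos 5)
Your proof is correct and follows essentially the same route as the paper's: a diagonal extraction of the affine coefficients followed by the explicit cylinder formulas (\ref{measurenu})--(\ref{measurenuk}) to pass the $h$-conformal measures to the limit. The paper's own argument is terser---it does not spell out the admissibility of $X^{(\infty)}$ or the upgrade from cylinder-wise to weak-$*$ convergence---so your extra care with the modulus condition and the uniform-continuity/Prokhorov step merely fills in details the authors leave implicit.
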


\begin{proof}
The proof of convergence of the maps  uses only the diagonal argument. Note that we do not require (and do not prove) this convergence to be uniform with respect to $k$.

To prove the convergence of  the conformal measures, it is enough to recall the explicit formulas (\ref{measurenu}) and (\ref{measurenuk}).
Let us fix an arbitrary  cylinder $I$, of length $m$. Then
$$\nu_0^{(n_s)}(I) =\frac{\left (|(^{n_s}f_{m-1}\circ
\dots\circ ^{n_s}f_1\circ ^{n_s}f_0)'|^{-h}\right ) _{|I}}
{\lambda^{n_s}_{0,h}\lambda_{1,h}^{n_s}\dots \lambda_{m-1,h}^{n_s}}$$
and it is clear that the convergence of the coefficients of the maps $^{n_s}f_k$ for $k=0, \dots m-1$ gives  the convergence of   $\nu_0^{(n_s)}(I)$  to $\nu_0^{(\infty)}(I)$. This easily implies that $\nu_0^{(n_s)}$ converge weakly to $\nu_0^{(\infty)}$, treated as measures in $\Sigma$ and also as measures in $\mathbb{C}$.
The same reasoning applies for the measures $\nu_k^{(n_s)}$.
Here, as usually, we identify, through an appropriate coding,  the measures on the Cantor sets $X^{(n_s)}_k$ and the measures on the symbolic space $\Sigma$.
\end{proof}

Now, let $X^{(n)}$ be a sequence of admissible Cantor sets, converging to $X^{(\infty)}$ in the sense of item (1) in Proposition~\ref{limitcantor1}. 

\begin{prop}\label{limitcantor2}
Let   $X^{(0)}, X^{(1)}, \dots X^{(n)},\dots$ be a sequence of admissible Cantor sets, converging to $X^{(\infty)}$ in the sense of item (1) in Proposition~\ref{limitcantor1}.
Assume that a sequence of subharmonic functions $F^{(n)}:Q\to \mathbb{R}$ is given:

$$F^{(n)}\in\mathcal{G}_{X^{(n)}}.$$
Then one extract a subsequence of the function $F^{(n_s)}$  such that $F^{(n_s)}$ converges uniformly on compact subsets of $Q$ to
$$F^{(\infty)}\in\mathcal G_{X^{(\infty)}}.$$ Moreover, the sequence of measures $\mu_{n_s}=\Delta(F^{(n_s)})$ converges weakly to $\mu^{(\infty)}=\Delta(F^{(\infty)})$.
\end{prop}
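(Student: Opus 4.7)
The plan is to extract a subsequence using compactness of uniformly bounded subharmonic functions, and then verify both that the limit lies in $\mathcal{G}_{X^{(\infty)}}$ and that the Laplacian passes through the limit. First I would establish uniform boundedness of the family $\{F^{(n)}\}$ on compact subsets of $Q$. The argument used to prove Proposition~\ref{equivmeasures} shows that $\mu_{F^{(n)}}(X^{(n)})=1$, together with the universal Harnack inequality \eqref{harn}, yields a constant $C$ depending only on $M,\underline a,\overline a,N,Q$ such that $1/C\le F^{(n)}|_{\gamma(X^{(n)})}\le C$. Combined with the Maximum Principle on the region enclosed by $\gamma(X^{(n)})$ (where $F^{(n)}$ is harmonic off $X^{(n)}$ and vanishes on $X^{(n)}$), this gives $0\le F^{(n)}\le C$ there, and since the curves $\gamma(X^{(n)})$ can be chosen inside a fixed compact subregion, one obtains a uniform bound on each compact $K\subset Q$.

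Since each $F^{(n)}$ is subharmonic on $Q$, the standard compactness theorem for uniformly bounded subharmonic functions extracts a subsequence $F^{(n_s)}$ converging in $L^1_{\mathrm{loc}}(Q)$ to a subharmonic function $F^{(\infty)}$. The convergence of cylinders in Proposition~\ref{limitcantor1} implies $X^{(n_s)}\to X^{(\infty)}$ in the Hausdorff sense, so for any compact $K\subset Q\setminus X^{(\infty)}$ the functions $F^{(n_s)}$ are harmonic on a fixed neighbourhood of $K$ for $s$ large; Harnack then upgrades the $L^1$ convergence to uniform convergence on $K$, and $F^{(\infty)}$ is harmonic on $Q\setminus X^{(\infty)}$. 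To conclude that $F^{(\infty)}\in\mathcal{G}_{X^{(\infty)}}$, I would invoke the Uniform Decay proposition: $\sup_{Q_I^{(n_s)}}F^{(n_s)}\le C\gamma^{|I|}$; since $Q_I^{(n_s)}\to Q_I^{(\infty)}$ for each fixed cylinder $I$, the same bound transfers to $F^{(\infty)}$ on $Q_I^{(\infty)}$. Intersecting over the shrinking nest of cylinders containing a point of $X^{(\infty)}$ forces $F^{(\infty)}=0$ on $X^{(\infty)}$ and simultaneously gives continuity of $F^{(\infty)}$ at each such point; strict positivity on $Q\setminus X^{(\infty)}$ follows from the minimum principle once one verifies $F^{(\infty)}\not\equiv 0$, e.g.\ by passing the lower bound $F^{(n_s)}|_{\gamma(X^{(n_s)})}\ge 1/C$ to the limit.

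The weak-$*$ convergence $\mu_{F^{(n_s)}}\to\mu_{F^{(\infty)}}$ is then automatic from $L^1_{\mathrm{loc}}$ convergence of subharmonic functions: distributional Laplacians converge and, each being a positive Radon measure, the convergence is weak-$*$; the total mass equals one because all $\mu_{F^{(n_s)}}$ are supported in a fixed compact set and the uniform decay estimate precludes any mass from escaping outside the shrinking cylinders surrounding $X^{(\infty)}$, forcing $\mu^{(\infty)}(X^{(\infty)})=1$. The main obstacle, in my view, is the transition in the previous paragraph: passing the vanishing property from $X^{(n_s)}$ to $X^{(\infty)}$. Without the quantitative bound $C\gamma^{|I|}$ supplied by the Uniform Decay proposition, there would be no reason for a weak limit of functions each vanishing on a \emph{different} Cantor set to vanish on the limit Cantor set; it is exactly this uniform equicontinuity along the defining nested families that secures both the correct boundary behaviour of $F^{(\infty)}$ and the preservation of mass for $\mu^{(\infty)}$.
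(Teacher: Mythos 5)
Your argument is correct and follows essentially the same route as the paper's proof: uniform local bounds coming from the normalization $\mu_{F^{(n)}}(X^{(n)})=1$ together with Harnack on $\gamma(X^{(n)})$, a normal-families/diagonal extraction (your $L^1_{\mathrm{loc}}$ compactness of subharmonic functions is a cosmetic variant of the paper's exhaustion of $Q\setminus X^{(\infty)}$ by compact sets), the uniform decay estimate transferred through the converging cylinder domains to force $F^{(\infty)}=0$ continuously on $X^{(\infty)}$, and pairing with $C_0^\infty$ test functions for the weak convergence of the Riesz measures (your explicit check of positivity and of the total mass is a point the paper leaves implicit). The one step you state too quickly --- that the bound on and inside $\gamma(X^{(n)})$ gives a uniform bound on every compact $K\subset Q$, including the region between $\gamma(X^{(n)})$ and $\partial Q$ where the maximum principle gives no control since $F^{(n)}$ may be large near $\partial Q$ --- needs an extra Harnack-chain (or Riesz-decomposition) argument using the uniform separation of $X^{(n)}$ from $\partial Q$; this is, however, precisely the same assertion of uniform boundedness that the paper itself makes without proof.
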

\begin{proof}
The proof, again, uses the diagonal argument.

Write $Q\setminus X^{(\infty)}$ as a countable union $\bigcup C_m$ of compact connected subsets of  $Q\setminus X^{(\infty)}$, where $C_{m+1}\supset C_m$:
$$C_m=\overline Q'_m\setminus \bigcup_{|J|=m}Q_J $$
where, $Q_J$ correspond to the coding for the limit set $X^{(\infty)}$
  and $Q'_m$ is an increasing sequence of topological discs, with  $X^{(\infty)}\subset Q'_m\subset \overline Q'_m\subset Q'_{m+1}$ and $\bigcup Q'_m=Q$.

Fix $m$. As $X^{(n)}\to X^{(\infty)}$,   the functions $F^{(n)}$ form a uniformly bounded
 sequence of harmonic functions in a neighbourhood of $C_m$, starting from some $n=n(m)$.
 Thus, one can extract a subsequence converging uniformly in $C_m$ to some function
 $F^{(\infty)}$ defined in $C_m$ and harmonic in ${\rm int}(C_m)$.
In the inductive construction, we choose yet another subsequence, converging uniformly in $C_{m+1}$.
The limit must coincide in ${\rm int}(C_m)$  with the previously found limit
$F^{(\infty)}$.

The required subsequence $n_s$ is now chosen according to the Cantor diagonal argument. It is obvious from the construction that $F^{(\infty)}$ is
positive and harmonic in $Q\setminus X^{(\infty)}$. It remains to check that setting $F^{(\infty)}(x)=0$ for $x\in X^{(\infty)}$ gives a
continuous (thus: also subharmonic) extension of $F^{(\infty)}$ to the whole domain $Q$. 

Let $I$ be an arbitrary cylinder, denote by  $l$  the length of $I$.
 Let
$I'$ be the cylinder of length $l-1$ containing $I$,  and let $Q_I$ (resp. $Q_{I'}$) be the domain corresponding to $I$ ($I'$), defined by the coding
for $X^{(\infty)}$. Similarly, denote by $Q_I^{(n)}$ (resp. $Q_{I'}^{(n)}$) the domain corresponding to $I$ (resp. $I'$), defined by the coding for $X^{(n)}$.

Then, for large $n_s$, $Q_I\subset Q_{I'}^{(n_s)}$. Let $z\in Q_I$. Using the estimate (\ref{decay1}) we get that
$$F^{(n_s)}(z)\le C\gamma^{l-1}$$
and, therefore,
$$F^{(\infty)}(z)\le C\gamma^{l-1}.$$
Thus $F^{(\infty)}(z)$ tends to $0$ as $z\to X^{(\infty)}$.

The above reasoning shows also that the convergence $F^{(n_s)}\to F^{(\infty)}$ is uniform in each set
$\overline Q'_m$.
Once the convergence  $ F^{(n_s)}\rightrightarrows  F^{(\infty)}$ has been established,  the convergence of the measures $\mu_{n_s}$ is standard: if $\phi\in C^\infty_0(Q)$ then
$$\Delta\tilde G^{(n_s)}(\phi)=\int \Delta\phi \tilde G^{(n_s)}\to \int \Delta\phi \tilde G^{(\infty)}=\Delta\tilde G^{(\infty)}(\phi).$$
\end{proof}

\section{Sufficient condition for the inequality $\dim(X)>\dim(\omega)$}\label{Bourgain}

In this section we show how to adapt  the argument proposed by J. Bourgain in \cite{bou} to prove the inequality   $\dim(X)>\dim(\omega)$. In this way, we obtain some explicit sufficient condition which guarantees the inequality    $\dim(X)>\dim(\omega)$ (see Proposition ~\ref{ineq} below).

Recall that  $\omega=\omega_0$ is the standard harmonic measure in $X_0$, evaluated at the point at $\infty$. Similarly, the harmonic measure on the set $X_k$ is denoted by $\omega_k$. We shall use the natural codings $C_0, C_1,\dots$
introduced in Definition~\ref{coding}.

In what follows, we often identify the  symbolic cylinder $I$ and the corresponding subset of the Cantor set $Q_I\cap X_0=C_0^{-1}(I)$.

\begin{prop}\label{ineq}
Let $X=X_0$ be the admissible Cantor set. Let, as above, $\omega=\omega_0$ be the harmonic measure on $X_0$, $\rho=dim_H(X)$ and let $\nu=\nu_0$ be the $\rho$-conformal measure on $X_0$. Assume the following:
\begin{itemize}
\item[*]
There exists $K>0$ and $\gamma>1$ such that for every cylinder $I=(I)_n\subset X$ of length $n$ there exists a subcylinder $IJ=(IJ)_{n+K(I)}$, $K(I)\le K$ such that
$$\max \left ( \frac{\omega(IJ)}{\omega (I)}:\frac{\nu(IJ)}{\nu(I)},  \frac{\nu(IJ)}{\nu (I)}:\frac{\omega(IJ)}{\omega(I)}\right )>\gamma.$$
\end{itemize}
Then $\dim_H(\omega)<\dim_H(X)-\delta $ where $\delta$ is a constant depending only on $\underline a$, $K$, $N$, $\gamma$.

\end{prop}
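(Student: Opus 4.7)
My plan is a Bourgain-style dimension gap argument using the $L^{1/2}$ moment of $\omega$ against $\nu$. Define
\[
Z_n \;=\; \sum_{|I|=n}\sqrt{\omega(I)\,\nu(I)};
\]
by Cauchy--Schwarz, $Z_n\le1$. I aim to show that the hypothesis forces exponential decay $Z_n\le(1-\epsilon)^{\lfloor n/K\rfloor}$ for some $\epsilon>0$ depending only on $\gamma,K,N$ and the fixed data $\underline a,\overline a,M$, then convert this into a pointwise lower bound $\omega(I_n(x))\ge c^n\,\nu(I_n(x))$ holding $\omega$-a.e.\ for some $c>1$, and finally extract the dimension gap by combining it with the conformal formula~(\ref{measurenu}) for $\nu$.

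For the decay step, factor $Z_{n+K} = \sum_{|I|=n}\sqrt{\omega(I)\nu(I)}\;S_I^K$ with $S_I^K = \sum_{|J|=K}\sqrt{p_J q_J}$, where $p_J=\omega(IJ)/\omega(I)$ and $q_J=\nu(IJ)/\nu(I)$. Splitting $J=J_1J_2$ with $|J_1|=K(I)\le K$ and applying Cauchy--Schwarz on the inner $J_2$-sum yields $S_I^K\le S_I^{K(I)}$, so it suffices to bound the shorter sum. The Hellinger identity
\[
1 - S_I^{K(I)} \;=\; \tfrac12\sum_{|J|=K(I)}\bigl(\sqrt{p_J}-\sqrt{q_J}\bigr)^2
\]
reduces the task to a single summand: the index $J^*$ provided by the hypothesis, where $p_{J^*}/q_{J^*}\notin[\gamma^{-1},\gamma]$. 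The explicit product formula~(\ref{measurenu}) gives $q_J\ge(\underline a/\overline a)^{\rho K}/N^K$ for every $|J|\le K$, while iterating Remark~\ref{commonharnack} (applied to the harmonic measures on nested sub-cylinders) gives $p_J\ge c_1^K$ for a universal $c_1>0$. Consequently $\min(p_{J^*},q_{J^*})\ge\tau>0$ uniformly, so $(\sqrt{p_{J^*}}-\sqrt{q_{J^*}})^2\ge\tau(\sqrt\gamma-1)^2$ and $S_I^K\le 1-\epsilon$ with $\epsilon=\tau(\sqrt\gamma-1)^2/2$.

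From this decay, Chebyshev yields $\omega\{x:\omega(I_n(x))<c^n\nu(I_n(x))\}\le c^{n/2}\,Z_n$; choosing $c\in(1,(1-\epsilon)^{-2/K})$ makes the right-hand side summable in $n$, so Borel--Cantelli gives $\omega(I_n(x))\ge c^n\nu(I_n(x))$ for $\omega$-a.e.\ $x$ and all $n\ge n_0(x)$. Plugging in $\nu(I_n(x))\asymp\diam(Q_{I_n(x)})^\rho/\prod_{k<n}\lambda_{k,\rho}$ and using $\log\diam(Q_{I_n(x)})\ge n\log\underline a$ (up to an additive $O(1)$), a short manipulation gives
\[
\frac{\log\omega(I_n(x))}{\log\diam(Q_{I_n(x)})}\;\le\;\rho - \frac{\log c}{\log(1/\underline a)} + \frac{\log\prod_{k<n}\lambda_{k,\rho}+O(1)}{|\log\diam(Q_{I_n(x)})|}.
\]
The characterization of $\rho$ in Proposition~\ref{dimension}, combined with the elementary estimate $\lambda_{k,s}\le\overline a^{s-\rho}\lambda_{k,\rho}$ for $s>\rho$, implies $\liminf_n n^{-1}\log\prod_{k<n}\lambda_{k,\rho}\le 0$; taking the liminf along such a subsequence kills the error term and gives $\underline d_\omega(x)\le\rho-\delta$ $\omega$-a.e., with $\delta=\log c/\log(1/\underline a)>0$. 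Thus $\dim_H(\omega)\le\rho-\delta$.

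The main obstacle is converting the multiplicative ratio gap at $J^*$ into an additive Hellinger deficit: since $J^*$ might a priori carry arbitrarily small mass under $\omega$ or $\nu$, the bound $p_{J^*}/q_{J^*}>\gamma$ alone does not control $(\sqrt{p_{J^*}}-\sqrt{q_{J^*}})^2$. The uniform lower bounds on $p_{J^*}$ and $q_{J^*}$ are therefore crucial, and it is precisely at this step that the bounded modulus separation (via iterated Harnack on harmonic measure) and the bounded contraction ratios (via the conformal product) enter decisively. A secondary subtlety is that $\prod_{k<n}\lambda_{k,\rho}$ need not stay bounded in $n$, forcing the dimension estimate to be taken along a suitable subsequence---but such a subsequence is automatically available from the very definition of $\rho$.
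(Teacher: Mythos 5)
Your overall scheme is essentially the paper's own Bourgain-type argument (decay of $\sum_I\sqrt{\omega(I)\nu(I)}$ forced by (*), then the conformal product formula and a subsequence coming from the definition of $\rho$, then Borel--Cantelli), and the skeleton is sound; your Hellinger-identity quantification of the one-step decay and the pointwise local-dimension finish are cosmetic variants of the paper's compactness argument and its good/bad cylinder splitting. However, two specific steps as written do not hold up. First, the uniform lower bound $p_J\ge c_1^K$ for the harmonic-measure ratios is not a consequence of ``iterating Remark~\ref{commonharnack}'': that remark only controls the oscillation of positive harmonic functions on a separating curve, and a lower bound for the harmonic measure of an individual child would additionally require the capacity estimate (as in the uniform decay proposition) together with a maximum-principle/Harnack-chain argument; as stated this claim is unproved. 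Fortunately it is also unnecessary: in both branches of (*) the Hellinger deficit at $J^*$ satisfies
$$\tfrac12\bigl(\sqrt{p_{J^*}}-\sqrt{q_{J^*}}\bigr)^2\;\ge\;\tfrac12\,q_{J^*}\bigl(1-\gamma^{-1/2}\bigr)^2\;\ge\;\tfrac{\tau}{2}\bigl(1-\gamma^{-1/2}\bigr)^2,$$
so only your (correctly justified) conformal lower bound $q_{J^*}\ge\tau$ is needed; you should drop the harmonic-measure lower bound altogether.

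Second, the inequality you invoke to get $\liminf_n n^{-1}\log\prod_{k<n}\lambda_{k,\rho}\le 0$ goes the wrong way: $\lambda_{k,s}\le\overline a^{\,s-\rho}\lambda_{k,\rho}$ combined with $\liminf_n\prod_k\lambda_{k,s}=0$ only yields a \emph{lower} bound on $\prod_k\lambda_{k,\rho}$ along that subsequence, which is useless here. The fact you want is true, but the correct route is the opposite bound $\lambda_{k,s}\ge\underline a^{\,s-\rho}\lambda_{k,\rho}$ (equivalently $\lambda_{k,\rho}\le\underline a^{\,\rho-s}\lambda_{k,s}$, which is what the paper uses): if $\prod_{k<n}\lambda_{k,\rho}\ge e^{\eta n}$ for all large $n$ and some $\eta>0$, then for $s>\rho$ close enough to $\rho$ one gets $\prod_{k<n}\lambda_{k,s}\ge(\underline a^{\,s-\rho}e^{\eta})^n\to\infty$, contradicting the characterization of $\rho$ in Proposition~\ref{dimension}. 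With these two repairs (and a one-line remark that $I_n(x)\subset B(x,\diam Q_{I_n(x)})$ when passing from cylinders to balls in the local-dimension step), your argument is complete and parallels the paper's proof.
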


\begin{proof}
Given $I=I_n\in\mathcal{E}_n$, denote by $\mathcal{E}_{n+K(I)}(I)$ the family of all cylinders of generation $n+K(I)$, which are contained in $I$.
First, we check that it follows from (*) that there exists $0<\beta<1$ such that, for every $I=I_n\in\mathcal{E}_n$,
\begin{equation}\label{krok}
\sum_{IJ\in\mathcal{E}_{n+K(I)}(I)}(\omega(IJ))^{\frac{1}{2}}(\nu(IJ))^{\frac{1}{2}}\le \beta \omega(I)^{\frac{1}{2}}\nu(I)^{\frac{1}{2}}
\end{equation}

The constant $\beta$ depends on $K$, $\underline a$, $\overline a$ and $\gamma$.

This can be seen as follows:
Notice that, given two sequences of positive numbers $c_1,\dots c_{\kappa}$ and $d_1,\dots d_{\kappa}$ such that $\sum c_i=\sum d_i=1$ we have, by Schwarz inequality, $\sum c_i^ {\frac{1}{2}}d_i^{\frac{1}{2}}\le 1$ and the equality holds iff the sequences are equal.

Let $\kappa$ be a positive integer and $B_0=\{(p_1,...,p_{\kappa},q_1,...q_{\kappa})\in[0,1]^{2{\kappa}}\;;\; \sum_ip_i=\sum_iq_i=1\}$ and, for $\gamma>1$ take the compact subset $B_{\gamma}$ of $B_0$  :
$$B_{\gamma}=\left\{(p_1,...,p_{\kappa},q_1,...q_{\kappa})\in[0,1]^{2{\kappa}}\;;\; \sum_ip_i=\sum_iq_i=1 \mbox{ and }\exists j\in\{1,..,{\kappa}\} \;;_;p_j \ge\gamma q_j \right\}.$$

The function $(p_1,...,p_{\kappa},q_1,...,q_{\kappa})\mapsto \sum_i\sqrt{p_iq_i}$ being continuous we get that there exists $\beta=\beta(\gamma, {\kappa})<1$ such that
$$\sup_{B_{\gamma}} \sum_i\sqrt{p_iq_i}\le\beta<1.$$
Finally, to get (\ref{krok}), one can now apply the previous to $p_i=\omega(IJ)/\omega(I)$ and $q_i=\nu(IJ)/\nu(I)$.

Now, (\ref{krok}) implies easily that for $n>K$,
\begin{equation}\label{Bourg}
\sum_{I\in\mathcal{E}_n}\omega(I)^{\frac{1}{2}}\nu(I)^{\frac{1}{2}}\le\tilde\beta^n
\end{equation}
with some $\beta<\tilde\beta<1$.

Next, fix some  $s>\rho$ such that
\begin{equation}\label{s}
\tilde\beta \underline a^{\rho-s}<1
\end{equation}
 Since $s>\rho=\dim_H(X)$, we have
$$\liminf_{n\to\infty}\lambda_{1,s}\lambda_{2,s}\dots \lambda_{n,s}=0.$$ Thus, there exists a sequence
$n_i\to\infty$ such that  $\lim_{i\to\infty}\lambda_{1,s}\lambda_{2,s}\dots \lambda_{n_i,s}=0$. Fix such a sequence. 

Obviously, one can assume that $\diam X=1$. Now, formula (\ref{measurenu}) gives
$$\nu(I_{n_i})= \frac{(\diam I_{n_i})^\rho}{\lambda_{1,\rho}\lambda_{2,\rho}\dots\lambda_{n_i,\rho}}.$$

Since $\lambda_{k,\rho}\le \underline a^{\rho-s}\lambda_{k,s}$, we can write, for every cylinder $I\in\mathcal{E}_{n_i}$,

$$\nu(I_{n_i})\ge {(\diam I_{n_i})^\rho}(\underline a)^{(s-\rho)n_i}\frac{1}{\lambda_{1,s}\lambda_{2,s}\dots\lambda_{n_i,s}}\ge {(\diam I_{n_i})^\rho}(\underline a)^{(s-\rho)n_i}, $$
for $n_i$ large, since the value of the omitted fraction tends to $\infty$.

Inserting this inequality to  (\ref{Bourg}) and using (\ref{s}) we get, for small positive $\varepsilon$,

\begin{equation}\label{bourg2}
\aligned
&\sum_{J\in\mathcal{E}_{n_i}}(\omega(J))^{\frac{1}{2}}(\diam(J))^{\frac{\rho-\varepsilon}{2}}\le\\
&\sum_{J\in\mathcal{E}_{n_i}}(\omega(J))^{\frac{1}{2}}(\nu(J))^{\frac{1}{2}}\underline a^{\frac{\rho-s}{2}n_i}\diam(J)^{-\frac{\varepsilon}{2}}\le\\
&\sum_{J\in\mathcal{E}_{n_i}}(\omega(J))^{\frac{1}{2}}(\nu(J))^{\frac{1}{2}}(\underline a)^{(\frac{\rho-s-\varepsilon}{2})n_i}\le\\
&\tilde\beta^{n_i}(\underline a)^{(\frac{\rho-s-\varepsilon}{2})n_i}=\left (\tilde \beta\underline a^{\rho-s}\underline a^{\frac{s-\rho-\varepsilon}{2}}\right )^{n_i}<\hat\beta^{n_i}
\endaligned
\end{equation}
with some $\hat\beta<1$, if $\varepsilon$ is small (since  $s$ has been chosen so that $\tilde\beta \underline a^{\rho-s}<1$).

We shall show that (\ref{bourg2}) implies that $\dim\omega<\rho$.

Denote by ${\mathcal F}_{n_i}$ the family of all cylinders $I\in\mathcal{E}_{n_i}$ for which $\omega(I)<\diam(I)^{\rho-\varepsilon}$, and by
${\mathcal H}_{n_i}$ the family  of the remaining cylinders in $\mathcal{E}_{n_i}$.
Then
$$\sum_{I\in {\mathcal H}_{n_i}}(\diam I)^{\rho-\varepsilon}
\le\sum_{I\in {\mathcal H}_{n_i}}\omega(I) \le 1$$
and
$$\sum_{I\in {\mathcal F}_{n_i}}\omega(I)=\sum_{I\in {\mathcal F}_{n_i}}\omega(I)^{\frac{1}{2}}\omega(I)^{\frac{1}{2}}\le
\sum_{I\in {\mathcal F}_{n_i}}\omega(I)^{\frac{1}{2}}\diam(I)^{\frac{\rho-\varepsilon}{2}}\le\hat\beta^{n_i}
$$

Thus, by Borel-Cantelli lemma,
$$\omega\left (\bigcup_{i_0}\bigcap_{i=i_0}^\infty  (\bigcup_{I\in{\mathcal H}_{n_i}}I )\right ) =1$$
On the other hand, we see, directly from the definition of Hausdorff measure, that
$(\rho-\varepsilon)$- dimensional Hausdorff measure of the above set
is $\sigma$--finite,

Therefore,
$\dim_H(\omega)\le\rho-\varepsilon$.
\end{proof}

\section{The alternative case}\label{alternative}
We will investigate the case  when condition (*) of proposition \ref{ineq} fails. We keep the notation of the previous sections. In particular, $X=X_0$ is an admissible Cantor set of dimension $\rho$. Let $\nu_k$ be the collection of $\rho$-conformal measures associated to $X$.
Note  that (although this fact in not used in  our  proof), we can  assume, using Theorem B,  
 that the starting measures $\nu_k$ are just the normalized $\rho$ dimensional Hausdorff measures.
\begin{prop}\label{impossiblecase2}
Suppose that for all $1>\gamma>0$ and $K\in N$ there exist a cylinder $I$ such that for all subcylinders $IJ$, where $J$ is a word of length $\le K$ we have
\begin{equation}\label{hypothesis2}
\gamma<\left|\frac{\omega(IJ)}{\omega(I)}:\frac{\nu_0(IJ)}{\nu_0(I)}\right|<\frac{1}{\gamma}.
\end{equation}
Then we can construct another admissible  Cantor set  $\tilde X$ (not necessarily of dimension $\rho$),  a $\rho$-conformal measure $\tilde\nu$ on $\tilde X$ and a bounded
subharmonic function $F\in\mathcal{G}_{\tilde X}$  such that $\Delta F=\tilde \nu$.
\end{prop}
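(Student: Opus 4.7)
The plan is to rescale near a sequence of cylinders on which the hypothesis forces harmonic and conformal measure to be nearly proportional, and then pass to a limit via Propositions~\ref{limitcantor1} and \ref{limitcantor2}.

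Choose sequences $\gamma_n \nearrow 1$ and $K_n \nearrow \infty$; by (\ref{hypothesis2}) pick, for each $n$, a cylinder $I_n$ of some length $\ell_n$ such that
$$\gamma_n \;<\; \frac{\omega(I_nJ)/\omega(I_n)}{\nu_0(I_nJ)/\nu_0(I_n)} \;<\; \frac{1}{\gamma_n}$$
for every subcylinder $I_nJ$ with $|J|\le K_n$. Let $\varphi_n : Q \to Q_{I_n}$ be the affine inverse branch of $f^{\ell_n}$ on $Q_{I_n}$, and set $X^{(n)} := X_{\ell_n} = \varphi_n^{-1}(X \cap Q_{I_n})$, an admissible Cantor set with the same constants $\underline a, \overline a, M, N$. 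Define
$$F_n := \frac{G_0 \circ \varphi_n}{\omega(I_n)} \;:\; Q \to \mathbb{R}.$$
Since $\varphi_n$ is affine, a direct change-of-variables calculation shows that $F_n \in \mathcal{G}_{X^{(n)}}$ with $\Delta F_n = (f^{\ell_n})_*(\omega|_{I_n})/\omega(I_n)$, a probability measure; in particular $\mu_{F_n}(J) = \omega(I_nJ)/\omega(I_n)$ for every symbolic cylinder $J$. By (\ref{measurenuk}) the $\rho$-conformal measure $\nu_0^{(n)}$ of $X^{(n)}$ satisfies the analogous identity $\nu_0^{(n)}(J) = \nu_0(I_nJ)/\nu_0(I_n)$.

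Now apply Proposition~\ref{limitcantor1} to extract a subsequence $n_s$ along which the affine pieces defining $X^{(n_s)}$ converge to those of an admissible Cantor set $X^{(\infty)}$ and $\nu_0^{(n_s)} \to \nu_0^{(\infty)}$ weakly, with $\nu_0^{(\infty)}$ being $\rho$-conformal on $X^{(\infty)}$. Apply Proposition~\ref{limitcantor2} to pass to a further subsequence so that $F_{n_s} \to F^{(\infty)} \in \mathcal{G}_{X^{(\infty)}}$ uniformly on compact subsets of $Q$ and $\mu_{F_{n_s}} \to \mu_{F^{(\infty)}}$ weakly. For any fixed symbolic cylinder $J$, once $K_{n_s} \ge |J|$ one has $\gamma_{n_s} < \mu_{F_{n_s}}(J)/\nu_0^{(n_s)}(J) < 1/\gamma_{n_s}$; the coefficient convergence gives Hausdorff convergence of the domains $Q_J^{(n_s)} \to Q_J^{(\infty)}$, whose boundaries are disjoint from the (limit) supports, so the weak convergences translate into the identity $\mu_{F^{(\infty)}}(J) = \nu_0^{(\infty)}(J)$. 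Since cylinders determine both measures, $\mu_{F^{(\infty)}} = \nu_0^{(\infty)}$; setting $\tilde X := X^{(\infty)}$, $\tilde\nu := \nu_0^{(\infty)}$, $F := F^{(\infty)}$ completes the construction.

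The most delicate point is the uniform boundedness of $F_n$ on $Q$, needed for the conclusion that $F$ is bounded. It follows from the standard capacity/Green's function comparison $\omega(I_n) \asymp \sup_{\partial Q_{I_n}} G_0$, valid here because the annulus separating $X^{(n)}$ from $\partial Q$ has modulus at least $M$, combined with the universal decay estimate (\ref{decay1}) applied to $G_0$ inside $Q_{I_n}$. This gives $\sup_Q F_n \le C$ for a universal constant, which then bounds $F^{(\infty)}$ pointwise.
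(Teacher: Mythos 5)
Your proposal is correct and follows essentially the same route as the paper: rescale the Green's function by the inverse branch on the chosen cylinders $I_n$, normalize by $\omega(I_n)$ so that $\Delta F_n$ is the normalized harmonic measure with $\mu_{F_n}(J)=\omega(I_nJ)/\omega(I_n)$ and $\nu_0^{(n)}(J)=\nu_0(I_nJ)/\nu_0(I_n)$, and pass to the limit via Propositions~\ref{limitcantor1} and~\ref{limitcantor2} to identify the limit measure with the $\rho$-conformal measure. Your extra paragraph on the uniform boundedness of the $F_n$ (via the comparison $\omega(I_n)\asymp\sup_{\partial Q_{I_n}}G_0$) addresses the word ``bounded'' in the statement more explicitly than the paper's own proof does, which is a welcome addition.
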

\begin{proof}
Let $(\gamma_n)$ be a sequence of  numbers in $(0,1)$, such that $\lim_{n\to\infty}\gamma_n=1$.
Under the hypothesis we can find a sequence $(I_n)_n$ of cylinders of size ${k_n}$, such that for every word $J$ of length $\le n$
\begin{equation}\label{simplify}
\gamma_{n}<\left|\frac{\omega(I_nJ)}{\omega(I_n)}:\frac{\nu_0(I_nJ)}{\nu_0(I_n)}\right|<\frac{1}{\gamma_n}.
\end{equation}

For any cylinder $I$ of length $k$, denote by  $f_I$ the linear map $f_{k-1}\circ\dots\circ f_0$ mapping $Q_I$ onto $Q$.
Consider the  functions $G_{k_n}$ defined in $Q$ by
$$G_{k_n}(x)=\frac{1}{\omega(I_n)}G(f_{I_n}^{-1}(x)).$$

Observe that $G_{k_n}\in{\mathcal G}_{X_{k_n}}$. Denote $\mu_{k_n}=\Delta G_{k_n}$. Thus, $\mu_{k_n}$ is a probability measure on $X_{k_n}$.
Let $J$ be a cylinder,  identified, through the coding, with the appropriate subset of $X_{n_k}$.
Then 
$$\mu_{k_n}(J)=\frac{\omega(I_{k_n}J)}{\omega(I_{k_n})}$$

The formula~(\ref{simplify}) can be now rewritten as follows: for every cylinder $J$ of length $\le n$:
\begin{equation}\label{simplify2}
\gamma_{n}<\left|\mu_{k_n}(J):\nu_{k_n}(J)\right|<\frac{1}{\gamma_n}.
\end{equation}
We can now apply Propositions~\ref{limitcantor1} and~\ref{limitcantor2} to the sequence of admissible Cantor sets $X^{(n)}:=X_{k_n}$,
the associated  $\rho$-conformal
measures $\nu_0^{(n)}:=\nu_{k_n}$  (and $\nu_m^{(n)}:=\nu_{k_n+m}, m=1,2\dots$ ) and the sequence of functions $$F^{(n)}:=G_{k_n}\in\mathcal{G}(X_{k_n})=\mathcal{G}(X^{(n)}).$$
We obtain an  admissible Cantor set $\tilde X$ and  a function $\tilde G\in{\mathcal G}_{\tilde X}$ such that $\Delta \tilde G=\tilde \mu$,
$\tilde \mu$ being the limit of (a subsequence of) the  measures $\mu_{n_k}$. Moreover, the measures $\nu_{k_n}$ converge weakly to the $\rho$- conformal measure $\tilde\nu$ on $\tilde X$.

On the other hand, the relation (\ref{simplify2}), implies that, for every cylinder $J$,
$$\frac{\mu_{k_n}(J)}{\nu_{k_n}(J)}\to 1$$
(where, again $J$ is identified with an appropriate subset of $X_{k_n}$).
This implies (cf . proposition \ref{limitcantor1}) that $\tilde\mu$ is a $\rho$-conformal measure on $\tilde X$, which completes the proof.
\end{proof}

\section{Rigidity argument}\label{volberg}
In this section we prove the following result which implies that the ``alternative case'' considered in the previous section cannot hold.

\begin{prop}\label{vol}
Let $X=X_0$ be an admissible Cantor set,   and let $(\nu_k)_{k=0}^\infty$ be the collection of associated $\rho$ conformal measures, where $\rho$ is not necessarily equal to the Hausdorff dimension of the sets $X_k$.  Further, let  $\tilde G\in\mathcal{G}_X$
and let $\tilde\omega=\Delta \tilde G$.
Then the measures $\tilde\omega$ and $\nu=\nu_0$ do not coincide.
\end{prop}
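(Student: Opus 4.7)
The plan is to argue by contradiction: assume $\tilde\omega=\nu_0$, and adapt the rigidity argument of Volberg \cite{vol1} to the non-homogeneous setting by combining the iteration mechanism from Section~\ref{green} with the compactness tools of Section~\ref{alternative}.

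First, I iterate the hypothesis. By Proposition~\ref{pot} together with the pushforward invariance of the conformal sequence (Proposition~\ref{confinv}), the functions
$$\tilde G_k := \mathcal P_{k-1}\circ\cdots\circ \mathcal P_0(\tilde G)$$
lie in $\mathcal G_{X_k}$ and satisfy $\Delta \tilde G_k = \nu_k$ for every $k\geq 0$. In parallel, for each cylinder $I$ of length $k$ I introduce the affine rescaling $h_I(z) := \nu_0(Q_I)^{-1}\,\tilde G(f_I^{-1}(z))$ for $z\in Q$. A direct computation using affinity of $f_I$ and the iterated conformality $(f_I)_*(\nu_0|_{Q_I}) = \nu_0(Q_I)\,\nu_k$ shows that $h_I \in \mathcal G_{X_k}$ with $\Delta h_I = \nu_k$, and unfolding the definition of $\mathcal P_k$ yields the convex decomposition $\tilde G_k = \sum_{|I|=k}\nu_0(Q_I)\,h_I$.

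Next, I exploit the resulting harmonic rigidity. For each cylinder $I$ of length $k$, the function $u_I := h_I - \tilde G_k$ is continuous on $Q$, vanishes identically on $X_k$, and satisfies $\Delta u_I = \nu_k-\nu_k = 0$ distributionally on $Q$; by Weyl's lemma $u_I$ is in fact harmonic throughout $Q$. Combined with Proposition~\ref{equivmeasures} (universal size bounds for elements of $\mathcal G_{X_k}$ on the separating curve $\gamma(X_k)$) and the uniform decay estimate (\ref{decay1}), this gives uniform control on the family $\{h_I\}$ on compact subsets of $Q\setminus X$. Then, along a diagonal subsequence of levels $k_j\to\infty$ and a choice of cylinders $I_j$ of length $k_j$, Propositions~\ref{limitcantor1} and~\ref{limitcantor2} produce an admissible limit Cantor set $X^{(\infty)}$, a $\rho$-conformal sequence on it, and a function $F\in \mathcal G_{X^{(\infty)}}$ with $\Delta F = \nu^{(\infty)}_0$. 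Applying Proposition~\ref{equivmeasures} once more, this time to $F$ and to (the restriction to $Q$ of) the Green's function at infinity of $X^{(\infty)}$, shows that the harmonic measure of $X^{(\infty)}$ is equivalent to the $\rho$-conformal measure $\nu^{(\infty)}_0$ with universally bounded density.

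The final step, which I expect to be the main obstacle, is to derive a contradiction from this approximate conformality. In the classical self-conformal case of \cite{vol1} one exploits an $f$-invariant measure equivalent to harmonic measure, which is unavailable here. The proof will instead combine the modulus-separation condition (item (3) of Definition~\ref{df1}), the capacity lower bound from Section~\ref{green}, and the exponential decay (\ref{decay1}) to show quantitatively that bounded equivalence between harmonic and $\rho$-conformal measure on $X^{(\infty)}$ would force each cylinder's harmonic mass to be determined purely by its size $|a_{k,i}|^\rho$ and independently of its position in $Q$, contradicting the non-triviality of the Dirichlet problem in $Q\setminus X^{(\infty)}$.
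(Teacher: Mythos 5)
Your first two paragraphs are essentially correct and reproduce the paper's starting point: iterating $\mathcal P_k$ gives $\tilde G_k\in\mathcal G_{X_k}$ with $\Delta\tilde G_k=\nu_k$, the rescaled functions $h_I$ satisfy $\Delta h_I=\nu_k$, and the difference $u_I=h_I-\tilde G_k$ is harmonic on $Q$ and vanishes on $X_k$ --- this is precisely the content of Lemma~\ref{ra} and equation (\ref{functionH}). The gap is in everything after that. The compactness step is circular: the limit data produced by Propositions~\ref{limitcantor1} and~\ref{limitcantor2} is an admissible Cantor set $X^{(\infty)}$ together with $F\in\mathcal G_{X^{(\infty)}}$ satisfying $\Delta F=\nu^{(\infty)}_0$, which is exactly an instance of the hypothesis you are trying to refute; passing to the limit transfers the assumed situation to another admissible set but creates no new information and hence no contradiction. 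Worse, the contradiction you then hope for cannot exist: by Proposition~\ref{equivmeasures}, harmonic measure is \emph{always} boundedly equivalent to $\Delta F$ for every $F\in\mathcal G_X$, so ``bounded equivalence between harmonic and $\rho$-conformal measure'' is automatic under your standing assumption and is not refutable; nor does bounded equivalence force the harmonic mass of a cylinder to depend only on $|a_{k,i}|^\rho$ and not on its position. The statement to be disproved is exact equality, and any proof must use it beyond the level of mutual absolute continuity with bounded density.

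What the paper actually does at this point --- and what is missing from your proposal --- is to exploit the rigidity coming from $u_I\equiv 0$. When $X\cap Q_I$ is not contained in the zero set of a harmonic function, the harmonic difference must vanish, giving $\tilde G_k\circ f^k=\alpha_I\,\tilde G_0$ on $Q_I$; comparing two cylinders $I,I'$ of the same length produces the functional equation $\frac{\alpha_I}{\alpha_{I'}}\,\tilde G_0\circ g_{II'}=\tilde G_0$ for the similitude $g_{II'}=f^{-k}_{I'}\circ f^k$, as in (\ref{dd}). From this one shows (either via strongly contracting $g_{II'}$, extending $\tilde G_0$ subharmonically to all of $\C$ and using the group generated by two contractions, or, when all cylinders of a given length have comparable diameters, via maps $\gamma_n z+b_n$ with $\gamma_n\to1$, $b_n\to b_0\neq 0$) that the zero set $X$ would be invariant under arbitrarily small translations, which is impossible for a compact Cantor set. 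Your route never compares different cylinders through the maps $g_{II'}$, so this mechanism is absent. Finally, you also do not treat the degenerate (``real'') case in which some $X_k$ \emph{is} contained in the zero set of a harmonic function, where $u_I\equiv 0$ cannot be concluded; the paper needs a separate argument showing that then some $X_k$ lies on a line, followed by the symmetrization $\hat G_k(z)=\tilde G_k(z)+\tilde G_k(\overline z)$ to reduce to the previous case. As it stands, the proposal sets up the correct objects but does not contain a proof of the proposition.
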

\begin{proof}

Consider, again, the sets
\begin{equation}
 X=X_0\stackrel{f_0}{\longrightarrow}X_1\stackrel{f_l}{\longrightarrow}X_2 \stackrel{f_2} {\longrightarrow}\dots
 \end{equation}

and the family of functions $\tilde G_j$ defined inductively by setting  $\tilde G_0=\tilde G$,   $\tilde G_{k+1}=\mathcal{P}_k (\tilde G_k)$, and the corresponding measures $\tilde\omega_0=\tilde\omega=\Delta\tilde G_0$, $\tilde\omega_k=\Delta\tilde G_k$.

The proof of Proposition~\ref{vol} will be divided into two parts.
\subsection{Non-real case}
\begin{lem}\label{ra}
Assume that none of the sets $X_0, X_1, X_2\dots$ is  contained in a set of zeros of a harmonic function defined in $Q$.  If $\tilde\omega=\nu$ then for every cylinder $I\in\mathcal{E}_k$ there exists a constant $\alpha_I$ such that  the equality
\begin{equation}
\tilde G_k\circ f^k=\tilde G_0\cdot \alpha_I
\end{equation}
holds everywhere in $Q_I$.
\end{lem}

\begin{proof}{\em of the lemma}

Since $\tilde\omega_k$ is the image of $\tilde\omega_0$ under the map $f^k$,  $\nu_k$ is the image of $\nu_0$ under $f^k$ and also $\tilde \omega_0=\nu_0$, we have: $\tilde \omega_k=\nu_k$.

Consider now two measures in $Q_I$: $(\tilde\omega_0)_{|Q_I}$ and $\tilde \omega_k\circ f^k_{Q_I}$.
We have
$$\tilde\omega_k\circ f^k_{|Q_I}=\nu_k\circ f^k_{Q_I}=(\alpha_I\cdot \nu_0)_{|Q_I}$$
where $\alpha_I=|(f^k)'|^\rho_{|Q_I}\cdot \lambda_{0,\rho}\cdot\dots \cdot\lambda_{k-1,\rho}$ .
But $(\tilde\omega_0)_{|Q_I}=\Delta((\tilde G_0)_{|Q_I}$ and
$(\tilde\omega_k\circ f^k)_{|Q_j}=\Delta((\tilde G_k\circ f^k)_{|Q_I})$.
Since the measures are equal in $Q_I$, we get
\begin{equation}\label{functionH}
(\tilde G_k\circ f^k) _{|Q_I}=(\tilde G_0)_{|Q_I}\cdot \alpha_I+H
\end{equation}
where $H$ is a harmonic function in $Q_I$.
On the other hand, both $ \tilde G_k\circ f^k$ and $\tilde G_0$ are equal to $0$ in $Q_I\cap X=I$  and by assumption  the set  $X_k$ (thus: also $X\cap Q_I=I$)  is not contained in a set of zeros of a harmonic function. We deduce that $H$ must be equal to $0$ and
the lemma follows.
\end{proof}
We continue the proof of Proposition ~\ref{vol}.  We keep the assumption of Lemma~\ref{ra}.
Consider two cylinders $I$, $I'$ of the same length $k$. Then $f^k(I)=f^k(I')=X_k$. Denote by $f^{-k}_{I'}$ the
branch of $f^{-k}$ mapping $X_k$ to $I'$ (and $Q$ to $Q_{I'}$).
Let $g=g_{II'}=f^{-k}_{I'}\circ f^k:Q_I\to Q_{I'}$.

Then, by lemma~\ref{ra},  everywhere in $Q_I$,
\begin{equation}\label{dd}
\frac{\alpha_I}{\alpha_{I'}}\tilde G_0\circ g=\tilde G_0
\end{equation}
Now consider two cases.
\begin{enumerate}
\item{case 1:} There exists $D>0$ such that for every $k\in\mathbb{N}$, for all $I,I'\in\mathcal{E}_k$,
$$\frac {{\rm diam}Q_I}{\rm{diam} Q_{I'}}<D$$
\item{case 2:} the opposite
\end{enumerate}

First, we deal with case 2. In this case, we can choose the cylinders $I, I'$ so that $g$ is a strong contraction; since it is a linear map, it is actually defined everywhere in $\mathbb{C}$ and we have  ${\rm cl}g(Q)\subset Q$, so
$$\bigcup_k g^{-k}(Q)=\mathbb{C}.$$
Now, two functions: $\frac{\alpha_{I'}}{\alpha_I}\tilde G_0\circ g$ and $\tilde G_0$ are defined and subharmonic  in $Q$, harmonic in an open connected dense set $Q\setminus (X\cup g^{-1}(X))$. Since they coincide in an open set  $Q_I$ (see (\ref{dd})), they coincide everywhere in $Q$. So, the formula
$$\frac{\alpha_{I'}}{\alpha_I}\tilde G_0\circ g$$ gives an extension of $\tilde G_0$ to a subharmonic function defined in  $g^{-1}(Q)$ and, in the same way, to a subharmonic function defined everywhere in $\mathbb{C}$.

Now, choosing another pair of cylinders, we can produce another relation of the type (\ref{dd}) and another extension of $\tilde G_0$, say
$$\frac{\alpha_{J'}}{\alpha_J}\tilde G_0\cdot h=\tilde G_0.$$
By the same argument as above, these two extensions must coincide.
We use the same letter $\tilde G_0$ to denote this, just described, extension.

In the reasoning below we use the following argument from A.Volberg's paper \cite{vol1}.

Denote
$$Z=\{z\in \mathbb{C}:\tilde G_0(z)=0\},$$
in particular,
\begin{equation}\label{zero}
Z\cap Q=X
\end{equation}
The set $Z$ is invariant under the action of both contractions $h$ and $g$, and, consequently, the action of the group generated by them. It is easy to see  that this group contains arbitrarily small translations. Thus, there exists such a small translation $T$ that $T(X)\subset Q$. This would imply $T(X)\subset X$, a contradiction. 

So, we are left with Case 1.
Given $k\in \mathbb{N}$, we consider all cylinders  of length $k$. There are $N^k$ of them, and, by the assumption,
\begin{equation}\label{sim}
\frac {{\rm diam}Q_I}{\rm{diam} Q_{I'}}<D
\end{equation}
for $I, I'\in \mathcal{E}_k$.

For $I, I'\in\mathcal{E}_k$ let, as above  $g_{II'}=f^{-k}_{I'}\circ f^k:Q_I\to Q_{I'}$.
Using (\ref{sim}) and the fact that ${\rm card}(\mathcal{E}_k)=N^k$ it is easy to see the following.

{\bf Claim.} Let $\delta={\rm dist}(X,\partial Q)$.
There exists  $0<b_0<\delta$  and a sequence $k_n\to\infty$  such that for every $k_n$ one can find two cylinders $I, I'\in \mathcal{E}_{k_n}$ such that, putting
$$g_{II'}=\gamma_nz+b_n$$
we have
\begin{equation}\label{lim}
\gamma_n\to 1, ~~~~~~~~~~~~~~~~~~b_n\to b_0.
\end{equation}

 The functions $\tilde G_0$ and $\tilde G_0\cap g_{II'}$ are continuous in $R:=Q\cap g_{II'}^{-1}(Q)$  and harmonic in the  open connected dense set $R\setminus (X\cup g_{II'}^{-1}X)$. Since they coincide in an open set $Q_I$, they coincide everywhere in $R$.

For $n$ sufficiently large we have  $X\subset R$ and $g^{-1}_{II}(X)\subset R$. Since both sets  can be defined as sets of zeros of $\tilde G_0$ and $\tilde G_0\circ g_{II'}$ respectively, they must coincide.
Passing to a limit  in (\ref{lim}), we see that $X$ would be invariant under a (small) translation; again  a contradiction. This ends the proof of Proposition~\ref{vol} in  the first case.

\subsection{Real case}
This case can be reduced to the previous one. We briefly describe the procedure: the previous proof goes through unchanged, until the formula (\ref{functionH}). Now, we cannot conclude that $H=0$. However,
 (\ref{functionH}) implies that some  $X_k$ is contained in a set of zeros of a harmonic function $H$.
Replacing $X_0$ by $X_k$, we can assume that $k=0$.

\begin{prop}
Le $X=X_0$ be  an admissible Cantor set. Assume that there exists a harmonic function $H$ in $Q$ such that $X\subset\{z:H(z)=0\}$. Then there exists $k\ge 0$ such that $X_k$ is contained in a straight line.
\end{prop}
\begin{proof} Denote by $l= \{z\in Q:H(z)=0\}$. Note that, after diminishing slightly the set $Q$ so that it still contains the whole set  $X$,
we can assume that $l$ is a union of finitely many real analytic arcs $l=l_1\cup\dots\cup l_r$, and that the set of intersections $l_j\cap l_j$ is finite.
One can also assume that each such arc has infinitely many intersections with the set $X$.
Let  $x\in X$ be an intersection point of some arcs, say $x\in l_1\cap l_2\cap X$.
Let $I$ be a cylinder  containing $x$, let $I'$ be another cylinder of the same length and 
let $x'=g_{II'}(x)$.

We claim that $x$ is an isolated point in either $l_1\cap X$ or $l_2\cap X$. Indeed,
 otherwise take  $x'= g_{II'}(x)$ and observe that the set $X$ in a neighborhood of $x'$ (more  precisely: the set  $X\cap Q_{I'}$)  would be contained in a union of two intersecting arcs, and not contained in one arc. Since the total number of intersections of the arcs $l_1,\dots l_r$ is finite, and the number of possible choices of $x'$ is infinite, we get a contradiction.

\

Therefore, one can assume that $X$ is contained in a union of a finite number  of analytic arcs $l_1, \dots l_r$, which do not intersect.
Pick a point $x\in X$ and a cylinder $I$  containing $x$, of sufficiently high generation $k$ so that the neighborhood $Q_I$ of $x$ intersects only one curve $l_j$. Then $f^k(Q_I)=Q$, $f^k(l_j\cap Q_I)$ is an analytic arc $L\subset Q$, and $X_k\subset L$.

\

The conclusion is that, replacing $X=X_0$ by some $X_k$, one can assume that $X$ is contained in one analytic arc $L$. We claim that $L$ is, actually,
a straight line. To check it, first notice that $g_{II'}(L\cap I)=L\cap I'$, thus
\begin{equation}\label{L}
g_{II'}(L\cap Q_I)=L\cap Q_{I'}
\end{equation}
Assume first that there are arbitrarily strong contractions  among the maps $g_{II'}$. Then, for such a strong contraction, (\ref{L}) implies that
$g_{II'}(L)\subset L$.
If $L$ is not a straight line then there are three points in $L$ which are non-collinear.
Applying the maps (contracting similitudies)  $g_{II'}$ and using the fact  $g_{II'}(L)\subset L$ we conclude that the curve $L$ would not be differentiable, a contradiction.

If there are no  strong contractions among the maps $g_{II'}$ (case one in the proof of part 1) then, as before, one can produce arbitrarily small translations $\tau$ such that $\tau(L)\cap Q\subset L$. Thus, $L$ is a straight line.
 \end{proof}

Composing the maps $f_k$ with rotations, we can assume that all the sets $X_k$ are contained in the real line $\mathbb{R}$.  Thus, since all
the functions $H$ in the formulas  (\ref{functionH}) must be equal to $0$ in $\mathbb{R}$,  $H(\overline z)=-H(z)$ and we can symmetrize all the
formulas (\ref{functionH}) by taking $\hat G_k(z)=\tilde G_k(z)+\tilde G (\overline z)$. Then we get, instead of (\ref{functionH}),

$$(\hat G_k\circ f^k) _{|Q_I}=(\hat G_0)_{|Q_I}\cdot \alpha_I$$
and the proof of the previous case  applies.
\end{proof}
\noindent{\bf Final conclusion-proof of Theorem A. }
\begin{proof}{\em of  Theorem A} is now clear. 
Indeed, either harmonic and $\rho$-conformal measure of $X$ satisfy relation $(*)$ and hence $\dim\omega<\dim_H X$  by proposition \ref{ineq} or $(*)$  fails and we get a contradiction by combining propositions \ref{vol} and \ref{impossiblecase2}.
\end{proof}

\section{Further comments and remarks}\label{Comments}

In this paper the number of subdomains associated to an admissible map is fixed (equal to some $N$, cf section \ref{definition}). Modulo some technical but small modifications the proofs can be carried out if we consider sequences of  admissible funtions ($f_n)$ with varying multiplicities $2\le N_n\le N$.

We can also easily modify the proof to get a uniform bound on  $\dim X-\dim\omega$. To see this, observe that the difference $\dim X-\dim\omega$ depends only on $\gamma$ and $K$ in proposition \ref{ineq}. Therefore, we need to show that $\gamma$ and $K$ can be chosen uniformly for $\underline a$, $M$ and $N$ fixed. But then, if the uniformity of  (*) fails,  
 for all  $\gamma>1$ and $K>0$ there exists a set $X$ and a cylinder $I$ as in proposition  \ref{impossiblecase2}. Using once again the diagonal argument (proposition \ref{limitcantor2}) we return to the situation of section \ref{volberg} and deduce the contradiction.

Nevertheless, the hypothesis on the  upper bound of multiplicities (and hence lower bound $\underline a$ of contracting ratios) cannot be omitted as shows the following proposition.

\begin{prop}
There exists a (unbounded) sequence $N_n$ and a sequence of admissible functions $(f_n)$  of multiplicities $N_n$ such that the dimension of harmonic measure $\omega$  of the  Cantor set $X$ associated to $(f_n)$ is equal to  the Hausdorff dimension of the set.
\end{prop}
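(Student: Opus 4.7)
The plan is to construct an admissible Cantor set $X\subset\mathbb R$ with rapidly growing multiplicities, for which the geometry forces the harmonic measure to distribute nearly uniformly among the children of every cylinder, hence to have dimension equal to $\dim_H X$.

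Fix a large disc $Q$ and a rapidly growing sequence $N_n\to\infty$ (for instance $N_n=2^n$). At generation $n$ place the $N_n$ subdiscs $Q_{n,1},\dots,Q_{n,N_n}$ with common radius $r_n=N_n^{-3}$ centered at the rescaled Chebyshev nodes $x_j^{(n)}=\cos(\pi(2j-1)/(2N_n))$ of $[-1,1]\subset\mathbb R$. Since the minimal spacing of Chebyshev nodes near $\pm 1$ is of order $N_n^{-2}$, the choice $r_n=N_n^{-3}$ keeps the pieces disjoint and well separated; for $Q$ of fixed large radius the modulus condition (3) of Definition~\ref{df1} holds uniformly in $n$. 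All centers lie on the real axis, so $X\subset\mathbb R$, and $\lambda_{n,s}=N_n r_n^s=N_n^{1-3s}$ gives $\dim_H X=1/3$ via Proposition~\ref{dimension}.

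To compute $\dim\omega$, note that $X\subset\mathbb R$ forces $\omega$ to coincide with the equilibrium measure of $X$. The key claim is
\[
\omega(Q_{Ij})\asymp\frac{\omega(Q_I)}{N_n},\qquad j=1,\dots,N_n,\ I\in\mathcal E_{n-1},
\]
with an implicit constant independent of $n$ and $I$. Iterating gives $\log\omega(Q_I)=-\sum_{k=1}^n\log N_k+O(n)$, whereas $\log\diam Q_I=-3\sum_{k=1}^n\log N_k+O(1)$. Since $\sum_{k=1}^n\log N_k\asymp n^2\gg n$,
\[
\frac{\log\omega(Q_I)}{\log\diam Q_I}\longrightarrow\frac{1}{3}\qquad\text{as }n\to\infty,
\]
and therefore $\dim\omega=1/3=\dim_H X$.

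The main obstacle is the uniform child-to-parent estimate displayed above. Harnack-type estimates (Remark~\ref{commonharnack}) together with the similitude $f^{n-1}\colon Q_I\to Q$ reduce it to the first generation, where one has to show that the harmonic measure at infinity of $\bigcup_j Q_{n,j}\subset Q$ is almost equidistributed among the $N_n$ components. This is the discrete analogue of the fact that the arcsine measure on $[-1,1]$ gives equal mass $1/N_n$ to the intervals between successive Chebyshev nodes. In the logarithmic-potential picture, because $r_n\ll N_n^{-2}$ the self-capacity term $\log(1/r_n)\sum_j w_j^2$ dominates the energy of a probability measure $\sum_j w_j\omega_{Q_{n,j}}$, whose minimizer under $\sum_j w_j=1$ is $w_j=1/N_n$; the off-diagonal logarithmic interactions contribute only a bounded multiplicative perturbation, uniformly in $n$. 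Rigorously controlling this perturbation, or alternatively invoking the weak convergence of equilibrium measures to the arcsine law with a quantitative rate, provides the required uniform constant.
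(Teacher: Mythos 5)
Your argument stands or falls with the uniform child-to-parent estimate $\omega(Q_{Ij})\asymp\omega(Q_I)/N_n$, and the justification you offer for it does not work. You claim that for $r_n=N_n^{-3}$ the self-energy term $\log(1/r_n)\sum_j w_j^2$ dominates the logarithmic energy, so that the minimizer must be nearly uniform. Quantitatively this is backwards: at the candidate $w_j=1/N_n$ the self-energy contributes $3\log N_n/N_n\to 0$, while each row of the interaction contributes $\frac{1}{N_n}\sum_{k\neq j}\log\frac{1}{|x_j-x_k|}=\log 2+O(\log N_n/N_n)$, which stays bounded away from zero; domination of the diagonal would require $\log(1/r_n)\gtrsim N_n$, i.e. radii exponentially small in $N_n$, and with such radii your set has Hausdorff dimension $0$ and the example degenerates. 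That the interaction cannot be dismissed as a bounded multiplicative perturbation is shown by the equally spaced configuration with the same radii: the discs are just as well separated, yet the equilibrium weights of the edge discs are of order $N_n^{-1/2}$, not $N_n^{-1}$. So the Chebyshev placement is essential, and the cell-by-cell comparability of the discrete equilibrium weights with $1/N_n$, uniformly in $n$, is precisely the nontrivial quantitative statement that is left unproved; the fallback you mention (weak convergence of equilibrium measures to the arcsine law with a rate) controls masses of macroscopic intervals, not of individual cells of size down to $\sim N_n^{-2}$ up to uniform multiplicative constants. A second, smaller gap: the reduction to the first generation needs uniform-in-$n$ comparability between the actual tail Cantor pieces and filled-in discs; Remark~\ref{commonharnack} and Proposition~\ref{equivmeasures} give constants depending only on $M$, but the capacity lower bound of Section~\ref{green} is proved using the fixed $N$ and $\underline a$, which degenerate in your construction, so those estimates would have to be re-established.

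For comparison, the paper sidesteps any quantitative equidistribution. It starts from the self-similar linear Cantor set $X_0$, for which $\dim\omega_{X_0}=\tau<\dim_H X_0$ and every set of positive $\omega_{X_0}$-measure has dimension at least $\tau$, and builds $X\subset X_0$ by covering an increasing family of compact sets $K_n$ of dimension $\tau$ and $\omega_{X_0}$-measure tending to $1$ by economical families of cylinders of unboundedly many generations at a time (this is where the unbounded multiplicities $N_n$ come from). The coverings force $\dim_H X=\tau$, while monotonicity of harmonic measure ($\omega_X\ge\omega_{X_0}$ on subsets of $X$) forces $\dim\omega_X\ge\tau$, giving equality. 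If you want to rescue your direct construction, the missing ingredient is a genuine proof that the equilibrium (equivalently, harmonic-from-infinity) weights of well-separated pieces centered at Chebyshev nodes are uniformly comparable to $1/N_n$, uniformly over generations and for the actual non-disc pieces; as written, this key claim is unsupported and the energy heuristic given for it is incorrect.
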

Let us give a sketch of the proof of this statement.

\begin{proof} Consider, for instance, the self-similar triadic linear Cantor set $X_0$ that we identify with the symbolic dyadic tree. 
If $\sigma$ is the left shift,  $I\in{\mathcal E}_n$ a cylinder of length $n$ and $K$ any set, we will write $IK$ for the set $\sigma^{-n}(K)\cap I$. So, $IK$ is a subset of $I$.

 It is well known that the dimension $\tau$ of the harmonic measure $\omega_{X_0}$ of $\R^2\setminus X_0$ is strictly smaller than the Hausdorff dimension of the set $X_0$. Take  $K_0\subset X_0$ to be a compact set of dimension $\tau$ and of harmonic measure $\omega_{X_0}(K_0)>\frac12$. 
Then, we can find a finite covering ${\mathcal J}_1$ of $K_0$ with cylinders $(I^1_j)_j$ with $I^1_j\in {\mathcal J}_1\subset {\mathcal E}_1\cup...\cup{\mathcal E}_{N_1}$ such that $\sum_j\diam (I^1_j)^{\tau+\frac{\tau}{2}}<\frac{1}{2}$.

Choose $K_1\supset K_0$ compact of dimension $\tau$ and such that 
$\omega_{X_0}(K_1)>\frac34$.  Since $\dim_H(I\cap K_0)\le\tau$ for all cylinders $I$, we can augment $K_1$ with all images $\sigma^{n}({K_0})$, $n=1,..., N_1$. We can therefore assume that $I\cap K_0\subset IK_1$ for all $I\in {\mathcal J}_1$ (but still $\dim_H(K_1)=\tau$).

There is a finite collection ${\mathcal J}_2$ of cylinders 
$(I^2_j)_j$ with $I^2_j\in {\mathcal J}_2\subset {\mathcal E}_1\cup...\cup{\mathcal E}_{N_2}$ 
covering  $K_1$ and verifying
$$\sum_j\diam (II^2_j)^{\tau+\frac{\tau}{4}}<\frac{1}{2^2}\diam(I)^{\tau+\frac{\tau}{ 2}},$$
for any cylinder $I\in\mathcal{J}_1$.  

We proceed by induction.
Assume we have constructed ${\mathcal J}_n\subset {\mathcal E}_1\cup...\cup{\mathcal E}_{N_{n}}$, a  finite collection of cylinders covering a compact set $K_{n-1}$ satisfying 
\begin{itemize}
\item $K_0\subset\cdots\subset K_{n-1}$ and $I\cap K_{n-2}\subset IK_{n-1}$ for all $I\in{\mathcal J}_{n-1}$
\item $\dim K_{n-1}=\tau$
\item $\omega_{X_0} (K_{n-1})>(1-\frac{1}{2^{n-1}})$
\item $\sum_{J\in{\mathcal J}_n}\diam (IJ)^{\tau+\frac{\tau}{2^{n}}}<\frac{1}{2^n}\diam (I)^{\tau+\frac{\tau}{2^{n-1}}}$, for all $I\in{\mathcal J}_{n-1}$.
\end{itemize}

Take $K_n\supset K_{n-1}$, a compact set  of dimension $\tau$ , such that $I\cap K_{n-1}\subset IK_n$, for all $I\in  {\mathcal J}_n$ and verifying 
$$\omega_{X_0}( K_n)>(1-\frac{1}{2^n}).$$

There is a finite collection ${\mathcal J}_{n+1}$ of cylinders 
$(I^{n+1}_j)_j$ with $I^{n+1}_j\in {\mathcal J}_{n+1}\subset {\mathcal E}_1\cup...\cup{\mathcal E}_{N_{n+1}}$ such that the sets $(I^{n+1}_{j})_j$ cover  $K_n$ and verify 
 $$\sum_j\diam (II^{n+1}_{j})^{\tau+\frac{\tau}{2^{n+1}}}<\frac{1}{2^{n+1}}\diam(I)^{\tau+\frac{\tau}{ 2^n}},$$
for every cylinder $I$ from $\mathcal{J}_n$.

Note that by Harnack's principle there exist a constant $C>0$ such that, for all cylinders $I$, 
$$\omega_{X_0}(IK_n)>\left(1-C\frac{1}{2^n}\right)\omega_{X_0}(I).$$

Consider the Cantor set 
$$X=\bigcap_{n\in\N}\bigcup_{I_1\in{\mathcal J}_1}...\bigcup_{I_n\in{\mathcal J}_n} I_1...I_n.$$

Note that $K_0\subset X\subset X_0$. Moreover, by construction, the Hausdorff dimension of $X$ is less or equal to $\tau$ and since $K_0\subset X_0$ it is equal to $\tau$. On the other hand, by the monotonicity of the measure, $\omega_X(A)\ge\omega_{X_0}(A)$, for all $A\subset X$.

We only need to show that $\dim\omega_X=\tau$. Suppose that $\dim\omega_X<\tau$. Then, there exists $A\subset X$ such that $\dim A<\tau$ and $\omega_X(A)=1$. We deduce that
$\omega_X(X\setminus A)=0$ and a fortiori, $\omega_{X_0}(X\setminus A)=0$. Therefore,
$\omega_{X_0}(K_0)=\omega_{X_0}(K_0\cap A)$ and $\dim(K_0\cap A)<\tau$ which is absurd.
\end{proof}


\begin{thebibliography}{99}
\bibitem[Ba1]{ba1} A. Batakis : Harmonic measure of some Cantor-type sets ,   Ann. Acad. Sc. Fenn. , Vol. 21,  1996, 27-54.
\bibitem[Ba2]{Ba2} A. Batakis: Continuity of the dimension of the harmonic measures on some Cantor sets under parturbation, Annales de l'Institut Fourier, 56(2006), 193-206
\bibitem[BaHa]{BaHa} A Batakis, G. Havard: Continuity of the dimension of the harmonic measures of non-homogeneous conformal IFS, in progress.
\bibitem[Bou]{bou} J. Bourgain: On the Hausdorff dimension of harmonic measure in higher dimension.
Invent. Math. 87 (1987), 477--483.
\bibitem[Ca]{Ca} L. Carleson: On the support of harmonic measure for sets of {C}antor type, Ann. Acad. Sci. Fenn., 10(1985) 113-123,
\bibitem[GM]{GM} J.Garnett, D.Marshall: Harmonic measure, New Mathematical Monographs, 2. Cambridge University Press, Cambridge, 2005
\bibitem[JV]{JV} P. Jones, T.Wolff:  Hausdorff dimension of harmonic measures in the plane. Acta Math. 161 (1988), no. 1-2, 131--144
\bibitem[LV]{LV} M. Lyubich, A. Volberg: A comparison of harmonic and balanced measures on Cantor repellors, Journal of Fourier Analysis and Aplications, Special Issue J-P. Kahane (1995) 379-399
\bibitem[Ma]{ma} N. Makarov: On the distortion of boundary sets under conformal mappings, Proc. London Math. Soc. 51 (1985), 269-384
\bibitem[MV]{MV} N. Makarov,A. Volberg: On the harmonic measure of discontinuous fractals, LOMI preprint E-6-86, Leningrad (1986)
\bibitem[UZ]{uz} M. Urba\'nski, A.Zdunik: Hausdorff dimension of harmonic measure for self-conformal sets. Adv. in Math. 171,1--58 (2002)

\bibitem[Vol1]{vol1} A.Volberg: On the dimension of harmonic measure of Cantor repellers. Michigan Math. J. 40 (1993), no. 2, 239--258.
\bibitem[Vol2]{vol2} A.Volberg:  On the harmonic measure of self-similar sets on the plane. Harmonic analysis and discrete potential theory (Frascati, 1991), 267--280, Plenum, New York, 1992
\bibitem[Zd1]{zd1} A.Zdunik:  Parabolic orbifolds and the dimension of the maximal measure for rational maps. Invent. Math. 99 (1990), no. 3, 627---649.
\bibitem[Zd2]{zd2} A.Zdunik: Harmonic measure on the Julia set for polynomial-like maps. Invent. Math. 128 (1997), no. 2, 303--327
\bibitem[Zd3]{zd3} A.Zdunik: Harmonic measure versus Hausdorff measures on repellers for holomorphic maps. Trans. Amer. Math. Soc. 326 (1991), 633--652
\end{thebibliography}
\end{document}